\newtheorem{theorem}{Theorem}[section]
\newtheorem{lemma}[theorem]{Lemma}
\newtheorem{corollary}[theorem]{Corollary}
\theoremstyle{definition}
\newtheorem{example}[theorem]{Example}
\theoremstyle{remark}
\numberwithin{equation}{section}
\begin{document}

\title{Hyperelliptic curves over $\mathbb{F}_q$ and Gaussian hypergeometric series}


\author{Rupam Barman}
\address{Department of Mathematics, Indian Institute of Technology, Hauz Khas, New Delhi-110016, INDIA}
\curraddr{}
\email{rupam@maths.iitd.ac.in}
\thanks{}

\author{Gautam Kalita}
\address{Department of Mathematics, Darrang College, Tezpur-784001, Sonitpur, Assam, INDIA}
\curraddr{}
\email{haigautam@gmail.com}
\thanks{}

\subjclass[2010]{Primary 11G20, 11T24}
\date{19th November, 2013}
\keywords{Character of finite fields, Gaussian hypergeometric series, Elliptic curves, Hyperelliptic curves, Trace of Frobenius}

\begin{abstract}
Let $d\geq2$ be an integer. Denote by $E_d$ and $E'_{d}$ the hyperelliptic curves over $\mathbb{F}_q$ given by
$$E_d: y^2=x^d+ax+b~~~ \text{and} ~~~E'_d: y^2=x^d+ax^{d-1}+b,$$ respectively.
We explicitly find the number of $\mathbb{F}_q$-points on $E_d$ and $E'_d$ in
terms of special values of ${_{d}}F_{d-1}$ and ${_{d-1}}F_{d-2}$ Gaussian hypergeometric series with characters
of orders $d-1$, $d$, $2(d-1)$, $2d$, and $2d(d-1)$ as parameters. This gives a solution to a problem posed by Ken Ono \cite[p. 204]{ono2} on
special values of ${_{n+1}}F_n$ Gaussian hypergeometric series for $n > 2$.
We also show that the results of Lennon \cite{lennon1} and the authors \cite{BK3} on trace of Frobenius of elliptic curves follow
from the main results.
\end{abstract}
\maketitle
\section{Introduction and statement of results}
 In \cite{greene}, Greene introduced the notion of hypergeometric functions
over finite fields or \emph{Gaussian hypergeometric series} which are analogous to the classical hypergeometric series. Since then,
many interesting relations between special values of these functions and the number of $\mathbb{F}_p$-points on certain varieties
have been obtained. For example, Koike \cite{koike}, Fuselier \cite{fuselier}, Lennon \cite{lennon1, lennon2}, and the authors
\cite{BK3, BK5} gave formulas for the number of $\mathbb{F}_q$-points on elliptic curves in terms of special values of ${_{2}}F_1$ Gaussian
hypergeometric series. Ono \cite{ono1} expressed the trace of Frobenius of the Clausen family of elliptic curves in terms of
a ${_{3}}F_2$ Gaussian hypergeometric series. 
Also, Vega \cite{vega} and the authors \cite{BK1, BK2} studied this problem for certain families of more general algebraic curves.
\par In all the known results connecting Gaussian hypergeometric series and algebraic curves,
expressions are obtained in terms of ${_{2}}F_1$ and ${_{3}}F_2$ Gaussian hypergeometric series.
Hence, the task remained to find similar results for ${_{n+1}}F_n$ Gaussian hypergeometric series with $n\geq3$.
Ahlgren and Ono studied this problem and deduced the value of a ${_{4}}F_3$ hypergeometric series at $1$ over $\mathbb{F}_p$ in
terms of representations of $4p$ as a sum of four squares using the fact that
the Calabi-Yau threefold is modular \cite{Ahlgren}.  For $n>3$, the non-trivial values of ${_{n+1}}F_n$
Gaussian hypergeometric series have been difficult to obtain, and this problem was also mentioned by Ono \cite[p. 204]{ono2}.
Recently, the authors \cite{BK4} found expressions for the number of zeros of the polynomial $x^d+ax+b$ over a finite field $\mathbb{F}_q$
in terms of special values of ${_{d}}F_{d-1}$ and ${_{d-1}}F_{d-2}$ Gaussian hypergeometric series with characters of orders $d-1$ and $d$
as parameters under the condition that $q\equiv 1$ $($mod $d(d-1))$, where $d\geq2$. The first author with Saikia \cite{BS1} obtained similar formulas
for the number of zeros of more general polynomials $x^d+ax^i+b$ and $x^d+ax^{d-i}+b$ over $\mathbb{F}_q$  in terms of
special values of $_{\frac{d-i}{i}}F_{\frac{d-2i}{i}}$ and $_{\frac{d}{i}}F_{\frac{d-i}{i}}$ Gaussian hypergeometric series with characters of orders
$\frac{d}{i}-1$ and $\frac{d}{i}$ as parameters under the condition that $q\equiv 1$ $($mod $\frac{d(d-i)}{i^{2}})$, where $d\geq 2$, $0 < i <d$,
and $i | d$.
\par Hyperelliptic curves play a crucial role in cryptography and counting points on such curves over a finite field is an important problem.
No connection between Gaussian hypergeometric series and hyperelliptic curves have been shown to date, except for elliptic curves which are
special cases of hyperelliptic curves. In this paper we consider the problem of expressing the number of points on certain families of hyperelliptic curves
in terms of special values of hypergeometric functions over finite fields. We also show that the results of Lennon \cite{lennon1} and
the authors \cite{BK3} on trace of Frobenius of elliptic curves follow from the main results.
\par We begin with some preliminary definitions needed to state our results. Let $q=p^e$ be a power of an odd prime $p$, and let $\mathbb{F}_q$
be the finite field of $q$ elements. Let $\widehat{\mathbb{F}_q^\times}$
denote the group of multiplicative characters $\chi$ on $\mathbb{F}_q^{\times}$.
We extend each character $\chi \in \widehat{\mathbb{F}_q^{\times}}$ to all of $\mathbb{F}_q$ by
setting $\chi(0):=0$. If $A$ and $B$ are two characters on $\mathbb{F}_q^{\times}$, then ${A \choose B}$ is defined by
\begin{align}\label{eq0}
{A \choose B}:=\frac{B(-1)}{q}J(A,\overline{B})=\frac{B(-1)}{q}\sum_{x \in \mathbb{F}_q}A(x)\overline{B}(1-x),
\end{align}
where $J(A, B)$ denotes the usual Jacobi sum and $\overline{B}$ is the inverse of $B$.
\par
Recall the definition of the Gaussian hypergeometric series over $\mathbb{F}_q$ first defined by Greene in \cite{greene}.
Let $n$ be a positive integer. For characters $A_0, A_1,\ldots, A_n$ and $B_1, B_2,\ldots, B_n$ on $\mathbb{F}_q$,
the Gaussian hypergeometric series ${_{n+1}}F_n$ is defined to be
\begin{align}\label{eq00}
{_{n+1}}F_n\left(\begin{array}{cccc}
                A_0, & A_1, & \ldots, & A_n\\
                 & B_1, & \ldots, & B_n
              \end{array}\mid x \right):=\frac{q}{q-1}\sum_{\chi}{A_0\chi \choose \chi}{A_1\chi \choose B_1\chi}
\cdots {A_n\chi \choose B_n\chi}\chi(x),
\end{align}
where the sum is over all characters $\chi$ on $\mathbb{F}_q$.
\par Throughout the paper, for $d\geq 2$ and  $a, b \neq 0$, we consider the hyperelliptic curves $E_d$ and $E'_d$ over $\mathbb{F}_q$ given by
\begin{align} \label{curve-1}
 E_d: y^2=x^d+ax+b
\end{align}
and
\begin{align} \label{curve-2}
 E'_d: y^2=x^d+ax^{d-1}+b,
\end{align}
respectively.
\par In this paper we give two explicit formulas for the number of $\mathbb{F}_q$-points on $E_d$ in terms of
special values of ${_{d}}F_{d-1}$ and ${_{d-1}}F_{d-2}$ Gaussian hypergeometric series with characters
of orders $d$, $2(d-1)$, and $2d(d-1)$ as parameters. In case of $d$ is even, odd powers of a character of degree $2(d-1)$ appear in the 2nd row
of a ${_{d}}F_{d-1}$ Gaussian hypergeometric series; whereas even powers of the same character appear in the 2nd row
of a ${_{d-1}}F_{d-2}$ Gaussian hypergeometric series in case $d$ is odd. The main results are stated below. 
The notation $\phi$ and $\varepsilon$ are reserved for quadratic and trivial
characters on $\mathbb{F}_q$, respectively. We also fix a generator $T$ for the
cyclic group of multiplicative characters $\widehat{\mathbb{F}_q^{\times}}.$
\begin{theorem}\label{thm1}
Let $q=p^e$, $p>0$ be a prime, and let $N_d$ denote the number of $\mathbb{F}_q$-points on $E_d$. If $d \geq 2$ is even
and $q\equiv1$ $($mod $2d(d-1))$, then
\begin{align}
N_d=&q+\phi(b)+q^{\frac{d}{2}}\phi(b(d-1))\notag\\
&\times {_{d}}F_{d-1}\left(\begin{array}{cccccccccc}
                \phi, & \varepsilon, & \chi, &\chi^2, &\ldots, &\chi^\frac{d-2}{2}, &\chi^\frac{d+2}{2}, & \ldots, & \chi^{d-1}\\
                 & \phi, & \psi,  &\psi^3, &\ldots, &\psi^{d-3}, &\psi^{d+1}, & \ldots, & \psi^{2d-3}
              \end{array}\mid \alpha \right),\notag
\end{align}
where $\chi$ and $\psi$ are characters of order $d$ and $2(d-1)$, respectively; and $\alpha = \dfrac{d}{a}\left(\dfrac{bd}{a(d-1)}\right)^{d-1}$.
\end{theorem}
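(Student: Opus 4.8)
The plan is to reduce the computation of $N_d$ to a single quadratic character sum, to expand that sum into Gauss and Jacobi sums, and finally to regroup the Gauss sums by means of the Hasse--Davenport multiplication formula so as to recover the stated ${_{d}}F_{d-1}$, paralleling the treatment of the zero-count in \cite{BK4}.

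First I would count affine points directly. Since the number of $y\in\mathbb{F}_q$ with $y^{2}=c$ equals $1+\phi(c)$ (recall the convention $\phi(0)=0$), summing over $x$ gives
\[
N_d=\sum_{x\in\mathbb{F}_q}\bigl(1+\phi(x^{d}+ax+b)\bigr)=q+\sum_{x\in\mathbb{F}_q}\phi(x^{d}+ax+b).
\]
Isolating the term $x=0$, which contributes $\phi(b)$, reduces the theorem to evaluating
\[
T:=\sum_{x\in\mathbb{F}_q^{\times}}\phi(x^{d}+ax+b)
\]
and to showing that $T=q^{\frac{d}{2}}\phi(b(d-1))\,{_{d}}F_{d-1}(\cdots\mid\alpha)$.

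Next I would linearize the trinomial. Fixing a nontrivial additive character $\theta$ and using $\theta(w)=\frac{1}{q-1}\sum_{\eta}g(\eta)\overline{\eta}(w)$ for $w\neq0$, where $g(\eta)=\sum_{u}\eta(u)\theta(u)$, together with the quadratic Gauss sum arising from the $y$-variable (alternatively, Greene's binomial expansion of a character evaluated at a sum \cite{greene}), one rewrites $\phi(x^{d}+ax+b)$ --- after pulling out a monomial so that the constant term becomes $1$ --- as a double sum over multiplicative characters whose coefficients are products of the binomials ${A\choose B}$ of \eqref{eq0}. Summing over $x\in\mathbb{F}_q^{\times}$ and invoking orthogonality collapses one character sum and forces a congruence among the surviving exponents, leaving a sum indexed by a single family of characters of the shape \eqref{eq00}.

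The crux is the third step. The Gauss sums attached to the monomial $x^{d}$ carry a character of order $d$, those produced by the linear term together with the quadratic character $\phi$ carry a character of order $2(d-1)$, and jointly they carry a character of order $2d(d-1)$; here the hypothesis $q\equiv1\pmod{2d(d-1)}$ is used to guarantee that characters $\chi$ of order $d$ and $\psi$ of order $2(d-1)$ exist and that every instance of the Hasse--Davenport multiplication formula applies. Splitting the order-$d$ and order-$2(d-1)$ Gauss sums by this formula, re-indexing, and regrouping the products into the binomials ${A_i\eta\choose B_i\eta}$ reproduces the top row $\phi,\varepsilon,\chi,\ldots,\chi^{d-1}$ and the bottom row $\phi,\psi,\psi^{3},\ldots$; the two omitted powers $\chi^{d/2}$ and $\psi^{d-1}$ both equal the quadratic character $\phi$, which is listed separately, so their absence is exactly what the parity of $d$ dictates. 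The accumulated factors $|g(\eta)|=\sqrt{q}$ combine to $q^{d/2}$, the leftover signs and constants collect into $\phi(b(d-1))$, and tracking the coefficients through the monomial normalization identifies the surviving hypergeometric variable as the critical-value quantity $\alpha=\frac{d}{a}\bigl(\frac{bd}{a(d-1)}\bigr)^{d-1}$, essentially $x^{d}+ax+b$ evaluated (up to scaling) at the point where $dx^{d-1}+a$ vanishes.

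I expect this third step to be the main obstacle: keeping the Gauss-sum bookkeeping under control through the multiplication formula, matching every parameter --- in particular the correct placement of the omitted middle characters, which is where the evenness of $d$ genuinely enters and separates this case from the odd case (a ${_{d-1}}F_{d-2}$ carrying the even powers of $\psi$) --- and confirming that the prefactor is exactly $q^{d/2}\phi(b(d-1))$.
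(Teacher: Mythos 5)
Your proposal is correct and follows essentially the same route as the paper: reduce the point count to a single Gauss-sum-weighted character sum, split the order-$d$ and order-$(d-1)$-related Gauss sums with the Hasse--Davenport multiplication formula (this is exactly where $q\equiv 1 \pmod{2d(d-1)}$ enters), and reassemble the products into Greene's binomials ${A\choose B}$ to read off the ${_{d}}F_{d-1}$, its prefactor $q^{d/2}\phi(b(d-1))$, and the argument $\alpha$. The only difference is cosmetic: you dispatch the $y$-variable at the outset via the count $1+\phi(c)$ and then expand $\phi$ of the trinomial, whereas the paper sums $\theta(zP(x,y))$ over $x,y,z$ and lets the $y$-sum generate the quadratic Gauss sum; both reductions lead to the identical sum (the paper's $D_{\frac{q-1}{2}}$) and the identical endgame, including your correct observation that the omitted powers $\chi^{d/2}=\psi^{d-1}=\phi$ account for the placement of $\phi$ in both parameter rows.
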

\begin{theorem}\label{thm2}
Let $q=p^e$, $p>0$ be a prime, and let $N_d$ denote the number of $\mathbb{F}_q$-points on $E_d$. If $d \geq 3$ is odd
and $q\equiv1$ $($mod $2d(d-1))$, then
\begin{align}
N_d&=q+\phi(b)-\phi(-b(d-1))T^{\frac{(q-1)}{4}}(-1)\notag\\
&+q^{\frac{d-1}{2}}\phi(-b(d-1))T^{\frac{(q-1)}{4}}(-1)T^{\frac{q-1}{2(d-1)}}\left(-\dfrac{1}{\alpha}\right)\times\notag\\
&{_{d-1}}F_{d-2}\left(\begin{array}{cccccccccc}
                \xi^{d-2}, & \xi^{3d-4}, &\ldots, &\xi^{d^2-3d+1}, &\xi^{d^2-d-1}, & \ldots, & \xi^{2d^2-5d+2}\\
                 & \psi^2,  &\ldots, & \psi^{d-3}, &\psi^{d-1}, & \ldots, & \psi^{2d-4}
              \end{array}\mid -\alpha\right),\notag
\end{align}
where $\psi$ and $\xi$ are characters of order $2(d-1)$ and $2d(d-1)$, respectively; and $\alpha=\dfrac{d}{a}\left(\dfrac{db}{a(d-1)}\right)^{d-1}$.
\end{theorem}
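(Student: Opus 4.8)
The plan is to count the affine points of $E_d$ directly through character sums and then reorganize the resulting expression into Greene's ${_{d-1}}F_{d-2}$ series, in parallel with the even case of Theorem \ref{thm1}. First I would write $N_d=\sum_{x}\bigl(1+\phi(x^d+ax+b)\bigr)=q+\sum_{x}\phi(x^d+ax+b)$. Fixing a nontrivial additive character $\theta$ of $\mathbb{F}_q$ and using $\frac1q\sum_{z}\theta(zw)=1$ if $w=0$ and $0$ otherwise, I would instead start from
\begin{align}
N_d=\frac1q\sum_{z\in\mathbb{F}_q}\sum_{x,y\in\mathbb{F}_q}\theta\bigl(z(y^2-x^d-ax-b)\bigr).\notag
\end{align}
The $z=0$ summand gives $q$; for $z\neq0$ the inner sum over $y$ is the quadratic Gauss sum $\sum_y\theta(zy^2)=\phi(z)g(\phi)$, where $g(\phi)=\sum_t\phi(t)\theta(t)$. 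Splitting off the $x=0$ contribution of the remaining $x$-sum and using $g(\phi)^2=\phi(-1)q$, that $x=0$ part collapses to exactly $\phi(b)$, already matching the second term of the claimed formula. This leaves the main term
\begin{align}
M:=\frac{g(\phi)}{q}\sum_{z\neq0}\phi(z)\theta(-zb)\sum_{x\neq0}\theta(-zx^d-zax),\notag
\end{align}
and the whole problem is to show that $M$ equals the remaining two terms of the theorem.

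Next I would evaluate the trinomial exponential sum over $x$. Writing $\theta(-zax)$ through the multiplicative expansion $\theta(w)=\frac{1}{q-1}\sum_{\lambda}g(\overline\lambda)\lambda(w)$ valid for $w\neq0$, the $x$-sum becomes a sum over characters $\lambda$ of $g(\overline\lambda)\lambda(-za)\sum_{x\neq0}\lambda(x)\theta(-zx^d)$; grouping $x$ by the fibers of $x\mapsto x^d$ shows the last sum vanishes unless $\lambda$ is trivial on the group $\mu_d$ of $d$-th roots of unity, in which case it is again a Gauss sum (with a factor $d$). Carrying out the $z$-sum the same way leaves a sum of products of Gauss sums indexed by a single character, together with the fixed factors $\phi$ and $\theta(-zb)$ absorbed into character values of $a,b,d,d-1$. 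The order-$d$ structure from $x^d$ and the order-$(d-1)$ structure hidden in $x^d+ax=x(x^{d-1}+a)$ are then linearized by Hasse--Davenport product relations of the form $\prod_{j=0}^{m-1}g(\chi\rho^{j})=\chi^{-m}(m)\,g(\chi^{m})\prod_{j=1}^{m-1}g(\rho^{j})$ with $\rho$ of order $m=d$ and $m=d-1$; this is exactly what creates the characters of orders $2(d-1)$ and $2d(d-1)$ appearing as $\psi$ and $\xi$, produces $\alpha=\frac{d^d b^{d-1}}{(d-1)^{d-1}a^d}$ with $-\alpha$ as the argument of the series, and supplies the prefactor $q^{(d-1)/2}\phi(-b(d-1))T^{(q-1)/4}(-1)\,T^{\frac{q-1}{2(d-1)}}(-1/\alpha)$ from the Gauss sums not absorbed into binomial coefficients. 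Finally I would convert the remaining products of Gauss sums into the coefficients $\binom{A}{B}=\frac{B(-1)}{q}J(A,\overline B)$ of \eqref{eq0} and recognize the character sum as the series ${_{d-1}}F_{d-2}$ of \eqref{eq00} with the stated parameters, the hypothesis $q\equiv1\pmod{2d(d-1)}$ guaranteeing that all of $\chi,\psi,\xi$ exist.

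The main obstacle is the Gauss-sum bookkeeping in the odd-$d$ case, which is genuinely more delicate than the even case. Two points require care. First, Greene's ${_{d-1}}F_{d-2}$ in \eqref{eq00} sums over \emph{all} characters $\chi$, whereas the character sum produced above runs only over characters constrained to be trivial on $\mu_d$, so exactly one value of $\chi$ must be added back by hand; that single omitted term is what yields the correction $-\phi(-b(d-1))T^{(q-1)/4}(-1)$, and pinning down its precise sign and character values is the crux. Second, when $d$ is odd the quadratic character $\phi$ does not merge as an \emph{even} power of the order-$2(d-1)$ character $\psi$ (contrast the even case of Theorem \ref{thm1}), so the quartic factor $T^{(q-1)/4}(-1)$ survives; tracking this factor, together with the many $\lambda(-1)$ and $\lambda(\mathrm{const})$ terms thrown off by the two Hasse--Davenport reductions, is where almost all of the computational effort lies. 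Once these factors are assembled correctly and matched against the definition of $\binom{A}{B}$, the identification of the series and the final formula follow.
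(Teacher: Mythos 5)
Your overall route --- additive-character orthogonality, multiplicative expansion into Gauss sums, the two Davenport--Hasse expansions for the $d$-th and $(d-1)$-th power structure, then conversion to Jacobi sums and identification with Greene's series --- is exactly the paper's route, and your preliminary reductions are correct: the $z=0$ and $x=0$ contributions do give $q+\phi(b)$, and your main term $M$ is precisely $D_{\frac{q-1}{2}}/q$ in the paper's notation \eqref{eq2}. The genuine gap is at the step you yourself call the crux, namely the origin of the correction term $-\phi(-b(d-1))T^{\frac{q-1}{4}}(-1)$. You claim it arises because your character sum ``runs only over characters trivial on $\mu_d$,'' so that ``exactly one value of $\chi$ must be added back.'' This is wrong on two counts. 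First, the characters trivial on $\mu_d$ form a subgroup of index $d$ in $\widehat{\mathbb{F}_q^\times}$, so a sum genuinely restricted to them would be missing $(q-1)(d-1)/d$ characters, not one. Second, and more importantly, the restriction leaves no trace at all: each $\lambda$ trivial on $\mu_d$ equals $\mu^d$ for exactly $d$ characters $\mu$, and $\sum_{x\neq 0}\lambda(x)\theta(-zx^d)$ is not a single Gauss sum but the sum $\sum_{j=0}^{d-1}\overline{\mu\chi_d^{\,j}}(-z)\,G(\mu\chi_d^{\,j})$ over those $d$ roots ($\chi_d$ of order $d$); re-indexing by the roots turns your expression into a sum over \emph{all} $q-1$ characters, which is exactly the sum over $0\le m\le q-2$ in \eqref{eq2}. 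So no character is missing, and your proposed mechanism produces nothing.

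The actual source of the correction term, when $d$ is odd, is the degeneracy of Lemma \ref{lemma1}: after the Davenport--Hasse expansions of $G_{dm}$ and $G_{(d-1)(-m-\frac{q-1}{2(d-1)})}$, the paired Gauss sums unavoidably include the pair $G_mG_{-m}$, and the identity $G_kG_{-k}=qT^k(-1)$ fails precisely at $T^m=\varepsilon$, where instead $G_0G_0=1=qT^0(-1)-(q-1)$. Splitting the $m=0$ term accordingly decomposes $D_{\frac{q-1}{2}}$ into a full character sum (which, after Lemma \ref{lemma2}, the shift $m\mapsto m-\frac{q-1}{2(d-1)}$, and Lemma \ref{lemma1}, becomes the stated multiple of ${_{d-1}}F_{d-2}(\cdots\mid-\alpha)$) \emph{minus} $(q-1)$ times a product of fixed-index Gauss sums, which Lemma \ref{lemma1} evaluates to $q\phi(-b(d-1))T^{\frac{q-1}{4}}(-1)$; note the term is subtracted, not ``added back.'' This mechanism also explains the structural contrast with Theorem \ref{thm1} that your plan cannot account for: when $d$ is even, the extra $G_{\frac{q-1}{2}}$ floating around allows the pairing to be rearranged into $\{G_{m+\frac{q-1}{2}}G_{-m}\}$ and $\{G_mG_{-m-\frac{q-1}{2}}\}$, which never degenerate, so Lemma \ref{lemma2} applies for every $m$ and no correction term appears. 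Without identifying the $m=0$ degeneracy, your argument cannot produce the third term of the formula, so as written the proof is incomplete at its decisive step.
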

We now state similar results for the curve $E'_d: y^2=x^d+ax^{d-1}+b$. If $d$ is odd, then we obtain a formula for the number of $\mathbb{F}_q$-points
on $E'_d$ under the condition that $q\equiv1$ $($mod $d(d-1))$.
\begin{theorem}\label{thm3}
Let $q=p^e$, $p>0$ be a prime, and let $N'_d$ denote the number of $\mathbb{F}_q$-points on $E'_d$.
If $d\geq 2$ is even and $q\equiv1$ $($mod $2d(d-1))$, then
\begin{align}
N'_d=&q+\phi(b)+q^{\frac{d}{2}}\phi((d-1))\times\notag\\
&{_{d}}F_{d-1}\left(\begin{array}{cccccccccc}
                \phi, & \varepsilon, & \chi, &\chi^2, &\ldots, &\chi^\frac{d-2}{2}, &\chi^\frac{d+2}{2}, & \ldots, & \chi^{d-1}\\
                 & \phi, & \psi,&\psi^3,  &\ldots, &\psi^{d-3}, &\psi^{d+1}, & \ldots, & \psi^{2d-3}
              \end{array}\mid \beta \right),\notag
\end{align}
where $\chi$ and $\psi$ are characters of order $d$ and $2(d-1)$, respectively; and $\beta=\dfrac{bd^d}{a^d(d-1)^{d-1}}$.
\end{theorem}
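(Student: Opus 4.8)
The plan is to deduce Theorem~\ref{thm3} from Theorem~\ref{thm1} by exploiting the reciprocal substitution $x\mapsto 1/x$, which for $d$ even converts $E'_d$ into a curve of the shape already handled in Theorem~\ref{thm1}. First I would record the point count as a character sum: writing $N'_d=\sum_{x\in\mathbb{F}_q}\left(1+\phi(x^d+ax^{d-1}+b)\right)$ and separating the $x=0$ term, which contributes $1+\phi(b)$, gives $N'_d=q+\phi(b)+S$ with $S=\sum_{x\in\mathbb{F}_q^{\times}}\phi(x^d+ax^{d-1}+b)$. Everything then reduces to evaluating $S$.

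The key step is the substitution $x\mapsto 1/x$, a bijection of $\mathbb{F}_q^{\times}$. Under it $x^d+ax^{d-1}+b$ becomes $(bx^d+ax+1)/x^d$, and since $d$ is even we have $\phi(x^d)=\phi(x)^d=1$, so that $\phi\left((bx^d+ax+1)/x^d\right)=\phi(bx^d+ax+1)$. Hence $S=\sum_{x\in\mathbb{F}_q^{\times}}\phi(bx^d+ax+1)$. Factoring out the leading coefficient via $\phi(bx^d+ax+1)=\phi(b)\,\phi\left(x^d+\tfrac{a}{b}x+\tfrac1b\right)$ then puts the sum into exactly the monic form governed by Theorem~\ref{thm1}, with the parameters $a,b$ there replaced by $a/b$ and $1/b$.

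I would then invoke Theorem~\ref{thm1} for the curve $y^2=x^d+\tfrac{a}{b}x+\tfrac1b$. That theorem gives $\sum_{x\neq 0}\phi\left(x^d+\tfrac{a}{b}x+\tfrac1b\right)=q^{d/2}\,\phi\left(\tfrac1b(d-1)\right)\,{_dF_{d-1}}\!\left(\cdots\mid\alpha''\right)$, where the hypergeometric parameters are literally those displayed in Theorem~\ref{thm1} (they depend only on $d$, not on $a,b$), and $\alpha''=\frac{d}{a/b}\left(\frac{(1/b)d}{(a/b)(d-1)}\right)^{d-1}$. A short simplification gives $\alpha''=\frac{bd^d}{a^d(d-1)^{d-1}}=\beta$, matching the argument in the statement. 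Reassembling, and using $\phi(b)^2=1$ together with $\phi(1/b)=\phi(b)$, one finds $S=\phi(b)\cdot q^{d/2}\phi\left(\tfrac{d-1}{b}\right)\,{_dF_{d-1}}\!\left(\cdots\mid\beta\right)=q^{d/2}\phi(d-1)\,{_dF_{d-1}}\!\left(\cdots\mid\beta\right)$; adding back $q+\phi(b)$ recovers the asserted formula, with the hypergeometric parameters inherited verbatim from Theorem~\ref{thm1}.

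Since this is essentially a corollary, I do not anticipate a serious obstacle; the real content lies in the two places requiring care. The first is the identity $\phi(x^{-d})=1$, which is precisely where the hypothesis that $d$ is even enters, and which explains both why $E'_d$ with $d$ even reduces to an $E_d$-type curve and why the odd case must be handled separately as in Theorem~\ref{thm2}. The second is checking that the congruence hypothesis $q\equiv 1\ (\mathrm{mod}\ 2d(d-1))$ carries over unchanged, so that the characters $\chi$ and $\psi$ of orders $d$ and $2(d-1)$ still exist and Theorem~\ref{thm1} applies directly to the transformed curve. Because the reciprocal substitution is performed at the level of the character sum over $\mathbb{F}_q^{\times}$, the affine nature of the count is automatically preserved and no separate analysis of points at infinity is needed.
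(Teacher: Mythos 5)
Your proof is correct, but it takes a genuinely different route from the paper. The paper proves Theorem \ref{thm3} from scratch with the same machinery as Theorem \ref{thm1}: it writes $q\cdot N'_d$ as a sum of additive characters $\theta(zP'(x,y))$, expands $\theta$ in Gauss sums via Lemma \ref{lemma4}, isolates the term $D_{\frac{q-1}{2}}$, and then applies the Davenport--Hasse relations (Corollary \ref{coro1}) together with Lemmas \ref{lemma1} and \ref{lemma2} to convert products of Gauss sums into the binomial coefficients defining the ${_{d}}F_{d-1}$ value --- essentially re-running the proof of Theorem \ref{thm1} with $T^{md+n}(x)$ replaced by $T^{md+n(d-1)}(x)$. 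You instead deduce Theorem \ref{thm3} as a corollary of Theorem \ref{thm1}: writing $N'_d=q+\phi(b)+S$ with $S=\sum_{x\in\mathbb{F}_q^{\times}}\phi(x^d+ax^{d-1}+b)$, the substitution $x\mapsto 1/x$ and the identity $\phi(x^{-d})=1$ (valid precisely because $d$ is even) turn $S$ into $\phi(b)\sum_{x\neq 0}\phi\left(x^d+\tfrac{a}{b}x+\tfrac{1}{b}\right)$, which Theorem \ref{thm1} evaluates with parameters $(a/b,\,1/b)$; your computations of the argument $\alpha''=\beta$ and of the sign $\phi(b)\,\phi\!\left(\tfrac{d-1}{b}\right)=\phi(d-1)$ are correct, and the congruence hypothesis and the hypergeometric parameters (which depend only on $d$) carry over verbatim. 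Your reduction is shorter, avoids any new Gauss-sum work, and makes transparent both the paper's Corollary 1.5 (the near-coincidence of the two point counts) and why the odd case must be treated separately: for $d$ odd the factor $\phi(x^{-d})=\phi(x)$ twists the sum, which is why Theorem \ref{thm4} involves characters of different orders and a weaker congruence. What the paper's direct computation buys is self-containedness and a template that still works in that odd case, where your substitution trick breaks down.
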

\begin{theorem}\label{thm4}
Let $q=p^e$, $p>0$ be a prime, and let $N'_d$ denote the number of $\mathbb{F}_q$-points on $E'_d$.
If $d\geq 3$ is odd and $q\equiv1$ $($mod $d(d-1))$, then
\begin{align}
N'_d=&q+q^{\frac{d-1}{2}}\phi(-ad)\times\notag\\
&{_{d-1}}F_{d-2}\left(\begin{array}{cccccccccc}
                \eta,& \eta^3 &\eta^5, &\ldots, &\eta^{d-2}, &\eta^{d+2}, & \ldots, & \eta^{2d-3}, & \eta^{2d-1}\\
                 & \rho,  &\rho^2, &\ldots & \rho^{\frac{d-3}{2}}, &\rho^{\frac{d+1}{2}}, & \ldots, & \rho^{d-2}, &\varepsilon
              \end{array}\mid -\beta \right),\notag
\end{align}
where $\eta$ and $\rho$ are characters of order $2d$ and $(d-1)$, respectively; and $\beta=\dfrac{bd^d}{a^d(d-1)^{d-1}}$.
\end{theorem}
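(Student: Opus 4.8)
The plan is to reduce the point count to a single quadratic character sum and then to expand that sum by Greene's finite field binomial theorem, collapsing it to the stated ${}_{d-1}F_{d-2}$ series by orthogonality together with the Hasse--Davenport product relation. First I would write the number of affine points as $N'_d=q+\sum_{x\in\mathbb{F}_q}\phi(x^d+ax^{d-1}+b)$, using that $\#\{y:y^2=v\}=1+\phi(v)$ for every $v\in\mathbb{F}_q$ (with $\phi(0)=0$). Writing $T:=\sum_x\phi(x^d+ax^{d-1}+b)$, the entire theorem amounts to showing $T=q^{(d-1)/2}\phi(-ad)\,{}_{d-1}F_{d-2}(\cdots\mid-\beta)$; the leading $q$ in the statement is just the trivial contribution, and for $d$ odd no extra $\phi(b)$ term survives, which I would confirm by tracking the degenerate terms carefully.

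Next I would normalize the polynomial. Substituting $x=-as$ (a bijection of $\mathbb{F}_q$) and using that $d$ is odd turns the argument into $b+a^d s^{d-1}(1-s)$, and the further scaling $s=\tfrac{d-1}{d}r$ brings the factor $s^{d-1}(1-s)$ to the size of its critical value $\tfrac{(d-1)^{d-1}}{d^d}$, producing exactly $\beta=\tfrac{bd^d}{a^d(d-1)^{d-1}}$. Thus $T$ becomes a scalar multiple of a sum of the shape $\sum_r\phi\bigl(1+\tfrac1\beta r^{d-1}(1-\tfrac{d-1}{d}r)\bigr)$, i.e. the quadratic character applied to a constant plus a product of $r^{d-1}$ with a linear factor. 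It is this normalization that accounts for the argument $-\beta$ and for the $\phi(-a)$ part of the final constant.

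I would then apply Greene's binomial theorem for characters, which rewrites $\phi$ of such a sum as a character sum $\tfrac{q}{q-1}\sum_\chi\binom{\phi}{\chi}\chi(\cdots)$, and separate the $r^{d-1}$ and linear factors by a second application, so that the $r$-dependence becomes a pure power of $r$. Carrying out the resulting $r$-sum by multiplicative orthogonality forces congruence conditions on the summation character modulo $d-1$ (from the $r^{d-1}$ factor) and, through the quadratic $\phi$, a parity condition; these are precisely the conditions that, under $q\equiv1\pmod{d(d-1)}$ (which guarantees all needed characters exist), let me re-index the surviving $\binom{\,\cdot\,}{\,\cdot\,}$ coefficients by powers of a character $\rho$ of order $d-1$, giving the bottom row of the series with $\rho^{(d-1)/2}=\phi$ omitted and an $\varepsilon$ appearing.

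The heart of the argument, and the step I expect to be the main obstacle, is converting the accumulated product of Gauss/Jacobi sums attached to the $x^d$ term into the family $\eta,\eta^3,\dots,\eta^{2d-1}$ of order-$2d$ characters in the top row. Here I would invoke the Hasse--Davenport product relation $\prod_{j=0}^{m-1}g(\chi\psi^j)=g(\chi^m)\,\chi^{-m}(m)\prod_{j=1}^{m-1}g(\psi^j)$ with $m=d$, fusing the $d$-th power structure coming from $x^d$ with the quadratic character into Gauss sums of order-$2d$ characters, while the factors $\chi^{-m}(m)$ supply the $\phi(d)$ completing the constant $q^{(d-1)/2}\phi(-ad)$ and the Gauss-sum magnitudes supply the $q^{(d-1)/2}$. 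The delicate bookkeeping is to verify that the skipped parameters $\eta^d=\phi$ (top) and $\rho^{(d-1)/2}=\phi$ (bottom) drop out at exactly the right index, that the remaining coefficients reassemble into the defining sum of ${}_{d-1}F_{d-2}$ with argument $-\beta$, and that the $\tfrac{q}{q-1}$ prefactors and Gauss-sum norms combine to give precisely $q^{(d-1)/2}\phi(-ad)$ rather than a stray power of $q$ or sign; matching these indices and the overall normalization is where the computation is most error-prone.
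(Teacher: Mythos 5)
Your plan is sound and would prove the theorem, but it follows a genuinely different route from the paper. The paper never writes the count as $q+\sum_x\phi(x^d+ax^{d-1}+b)$; instead it uses Fuselier's additive-character method, writing $q\cdot N'_d=\sum_{x,y,z}\theta\bigl(z(x^d+ax^{d-1}+b-y^2)\bigr)$, expanding each $\theta$ in Gauss sums and collapsing by orthogonality to the single sum $D_{\frac{q-1}{2}}=\frac{\phi(-a)G_{\frac{q-1}{2}}}{q-1}\sum_l G_{-l}G_{-(d-1)l}G_{d(l+\frac{q-1}{2d})}T^l\bigl(\frac{b}{a^d}\bigr)$, which is then restructured by the Davenport--Hasse relation and the Gauss--Jacobi conversion $G_mG_{-n}=J(T^m,T^{-n})G_{m-n}$. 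Your route (quadratic-character point count, normalization $x=-as$, $s=\frac{d-1}{d}r$, Greene's binomial theorem twice, orthogonality in $r$) instead lands on $\frac{q^2}{q-1}\sum_\chi\binom{\phi}{\chi}\binom{\chi}{\overline{\chi}^{\,d-1}}\chi(\mathrm{const})$, which is the same intermediate object in Jacobi-sum clothing; from there the endgame is identical. What your route buys is a more elementary front end: no auxiliary variable $z$, no fourfold sum, and the trivial term $q$ and the absence of a $\phi(b)$ term appear more transparently (in the paper these emerge from a cancellation between $B=1+q\phi(b)$ and a $-q\phi(b)$ hidden in $D_{\frac{q-1}{2}}$, coming from the correction $G_kG_{-k}=qT^k(-1)-(q-1)\delta$); this is essentially the method of Vega and of the authors' earlier papers. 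What the paper's route buys is uniform bookkeeping: all degenerate terms live inside Gauss-sum identities, whereas in your version the $\delta$-corrections of Greene's binomial theorem accumulate across two applications and must be tracked by hand, as you anticipate. Two small calibration points: the bottom row $\rho,\dots,\rho^{d-2},\varepsilon$ does not come from the orthogonality constraints alone but from a second application of Hasse--Davenport with $m=d-1$ (expanding the Gauss sum of $\chi^{d-1}$), parallel to the $m=d$ application you describe for the top row; and in your normalized sum the inner expression should read $1+\frac{d}{\beta}r^{d-1}\bigl(1-\frac{d-1}{d}r\bigr)$, i.e.\ there is a factor $d$ you dropped, which is precisely where part of the $\phi(d)$ in the constant $\phi(-ad)$ originates.
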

From Theorem \ref{thm1} and Theoem \ref{thm3}, we have the following result.
\begin{corollary}
 Let $d\geq 2$ be even and $a, b \in \mathbb{F}_q^{\times}$. For $q\equiv 1 (\text{mod}~2d(d-1))$, the hyperelliptic curves $y^2=x^d+ax+b$ 
 and $y^2=x^d+ax^{d-1}+b$ have equal number of $\mathbb{F}_q$-points if $b^{d-2}=1$ and $b$ is a quadratic residue in $\mathbb{F}_q$.
\end{corollary}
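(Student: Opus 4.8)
The plan is to read off the two closed-form expressions for $N_d$ and $N'_d$ from Theorem \ref{thm1} and Theorem \ref{thm3}, which both apply exactly in the stated regime ($d$ even, $q\equiv 1 \pmod{2d(d-1)}$), and to show they agree term by term under the two hypotheses. First I would record the crucial structural observation: the two ${_d}F_{d-1}$ Gaussian hypergeometric series occurring in these theorems have \emph{identical} parameter lists, namely top row $\phi,\varepsilon,\chi,\chi^2,\ldots,\chi^{\frac{d-2}{2}},\chi^{\frac{d+2}{2}},\ldots,\chi^{d-1}$ and bottom row $\phi,\psi,\psi^3,\ldots,\psi^{d-3},\psi^{d+1},\ldots,\psi^{2d-3}$, with $\chi$ of order $d$ and $\psi$ of order $2(d-1)$. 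Consequently the two formulas differ only in the scalar prefactors $\phi(b(d-1))$ versus $\phi(d-1)$ and in the arguments $\alpha$ and $\beta$ at which this common function is evaluated, since the summands $q+\phi(b)$ already coincide.

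Next I would compare the two arguments by an elementary simplification. Expanding $\alpha=\dfrac{d}{a}\left(\dfrac{bd}{a(d-1)}\right)^{d-1}=\dfrac{d^d b^{d-1}}{a^d(d-1)^{d-1}}$ and noting $\beta=\dfrac{bd^d}{a^d(d-1)^{d-1}}$, one sees at once that $\alpha=b^{d-2}\beta$. Hence the hypothesis $b^{d-2}=1$ forces $\alpha=\beta$, so the (common) hypergeometric series is evaluated at the same point in both theorems and therefore takes the same value.

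It then remains to reconcile the prefactors, and here I would use that $\phi$ is multiplicative: $\phi(b(d-1))=\phi(b)\phi(d-1)$. The hypothesis that $b$ is a quadratic residue in $\mathbb{F}_q$ gives $\phi(b)=1$, whence $\phi(b(d-1))=\phi(d-1)$. Combining this with $\alpha=\beta$ shows that the hypergeometric terms $q^{\frac d2}\phi(b(d-1)){_d}F_{d-1}(\cdots\mid\alpha)$ and $q^{\frac d2}\phi(d-1){_d}F_{d-1}(\cdots\mid\beta)$ are equal, and since $q+\phi(b)$ is common to both expressions we conclude $N_d=N'_d$.

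I do not expect a genuine obstacle: all the substantive work has already been done in Theorems \ref{thm1} and \ref{thm3}, and the corollary merely isolates the two conditions—$b^{d-2}=1$ to align the arguments and $b$ a square to align the quadratic-character prefactors—under which the two point counts must coincide. The only step requiring a little care is the algebraic identity $\alpha/\beta=b^{d-2}$, which I would establish by clearing the $(d-1)$-st power appearing in the definition of $\alpha$.
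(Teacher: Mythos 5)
Your proposal is correct and is precisely the argument the paper intends: the corollary is stated as an immediate consequence of Theorems \ref{thm1} and \ref{thm3}, and the verification you supply — that the two series have identical parameter lists, that $\alpha=b^{d-2}\beta$ so $b^{d-2}=1$ aligns the arguments, and that $\phi(b)=1$ aligns the prefactors $\phi(b(d-1))$ and $\phi(d-1)$ — is exactly the computation the authors leave implicit. Nothing is missing.
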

We now give two examples to show how the above theorems are applied in specific values of $d$.
\begin{example}
 Let $d=4$. Let $a, b \in \mathbb{F}_q^{\times}$ and $q\equiv 1 (\text{mod}~24)$. Also,
 let  $\chi_4$ and $\chi_6$ be characters of order $4$ and $6$, respectively. Then from Theorem \ref{thm1}, we deduce that
 \begin{align}
  &\#\{(x, y)\in \mathbb{F}_q^{2}: y^2=x^{4}+ax+b\}\notag\\
  &=q+\phi(b)+q^2\phi(3b)\cdot {_{4}}F_{3}\left(\begin{array}{cccccccccc}
                \phi, & \varepsilon, & \chi_4, & \chi_4^{3}\\
                 &\phi, & \chi_6, &\chi_6^5
              \end{array}\mid \frac{256b^3}{27a^4}\right).\notag
 \end{align}
From Theorem \ref{thm3}, we deduce that
\begin{align}
  &\#\{(x, y)\in \mathbb{F}_q^{2}: y^2=x^{4}+ax^3+b\}\notag\\
  &=q+\phi(b)+q^2\phi(3)\cdot {_{4}}F_{3}\left(\begin{array}{cccccccccc}
                \phi, & \varepsilon, & \chi_4, & \chi_4^{3}\\
                 &\phi, & \chi_6, &\chi_6^5
              \end{array}\mid \frac{256b}{27a^4}\right).\notag
 \end{align}
\end{example}
\begin{example}
 Let $d=5$ and $a, b \in \mathbb{F}_q^{\times}$. Then from Theorem \ref{thm2}, for $q\equiv1 (\text{mod}~40)$, we deduce that
 \begin{align}
  &\#\{(x, y)\in \mathbb{F}_q^{2}: y^2=x^{5}+ax+b\}\notag\\
  &=q+\phi(b)-\phi(-b)T^{\frac{q-1}{4}}(-1)+q^2\phi(-b)T^{\frac{q-1}{4}}(-1)T^{\frac{q-1}{8}}\left(-\frac{256a^5}{3125b^4}\right)\times\notag\\
  &\hspace{.6cm} {_{4}}F_{3}\left(\begin{array}{cccccccccc}
                \xi^3, & \xi^{11}, & \xi^{19}, & \xi^{27}\\
                 &\psi^2, & \psi^4, &\psi^6
              \end{array}\mid -\frac{3125b^4}{256a^5}\right),\notag
 \end{align}
 where $\xi$ and $\psi$ are characters of order $8$ and $40$, respectively.\\
Again from Theorem \ref{thm4}, for $q\equiv1 (\text{mod}~20)$, we deduce that
\begin{align}
  &\#\{(x, y)\in \mathbb{F}_q^{2}: y^2=x^{5}+ax^4+b\}\notag\\
  &=q+q^2\phi(-5a)\cdot {_{4}}F_{3}\left(\begin{array}{cccccccccc}
                \eta, & \eta^3, & \eta^7, & \eta^9\\
                 &\chi_4, & \chi_4^3, &\varepsilon
              \end{array}\mid -\frac{3125b}{256a^5}\right),\notag
 \end{align}
 where $\chi_4$ and $\eta$ are characters of order $4$ and $10$, respectively.
\end{example}
\section{Preliminaries}
In this section, we recall some results which we will use to prove our main results.
We start defining the additive character $\theta: \mathbb{F}_q \rightarrow \mathbb{C}^{\times}$ by
\begin{align}
\theta(\alpha)=\zeta^{\text{tr}(\alpha)}\notag
\end{align}
where $\zeta=e^{2\pi i/p}$ and $\text{tr}: \mathbb{F}_q \rightarrow \mathbb{F}_p$ is the trace map given by
$$\text{tr}(\alpha)=\alpha + \alpha^p + \alpha^{p^2}+ \cdots + \alpha^{p^{e-1}}.$$
For $A\in \widehat{\mathbb{F}_q^\times}$, the \emph{Gauss sum} is defined by
\begin{align}
G(A):=\sum_{x\in \mathbb{F}_q}A(x)\zeta^{\text{tr}(x)}=\sum_{x\in \mathbb{F}_q}A(x)\theta(x).\notag
\end{align}
Recall that $T$ denotes a fixed generator of the cyclic group $\widehat{\mathbb{F}_q^\times}$. We denote by $G_m$ the Gauss sum $G(T^m)$.
The following lemma provides a formula for the multiplicative inverse of a Gauss sum.
\begin{lemma}\emph{(\cite[Eqn. 1.12]{greene}).}\label{lemma1}
If $k\in\mathbb{Z}$ and $T^k\neq\varepsilon$, then
$$G_kG_{-k}=qT^k(-1).$$
\end{lemma}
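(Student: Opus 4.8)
The plan is to prove this standard Gauss-sum identity by a direct evaluation of the product $G_kG_{-k}$ as a double character sum, collapsed by the orthogonality relations for the additive and multiplicative characters. Writing out each Gauss sum from its definition and using that $\theta$ is additive (so $\theta(x)\theta(y)=\theta(x+y)$) together with the fact that $T^{\pm k}$ vanish at $0$, I would first record
\begin{align}
G_kG_{-k}=\sum_{x,y\in\mathbb{F}_q^{\times}}T^k(x)\,T^{-k}(y)\,\theta(x+y).\notag
\end{align}
The decisive manipulation is the multiplicative change of variables $x=ty$, where for each fixed $y\in\mathbb{F}_q^{\times}$ the parameter $t$ ranges over $\mathbb{F}_q^{\times}$; this is a bijection of $(\mathbb{F}_q^{\times})^2$ onto itself. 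Since $T^k(ty)T^{-k}(y)=T^k(t)$ and $x+y=y(t+1)$, the sum separates as
\begin{align}
G_kG_{-k}=\sum_{t\in\mathbb{F}_q^{\times}}T^k(t)\sum_{y\in\mathbb{F}_q^{\times}}\theta\bigl(y(t+1)\bigr).\notag
\end{align}

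Next I would evaluate the inner additive sum by orthogonality of $\theta$. When $t=-1$ the argument is $0$, so the inner sum is $\sum_{y\neq0}\theta(0)=q-1$; when $t\neq-1$, the scalar $t+1$ is nonzero, so $y(t+1)$ runs over $\mathbb{F}_q^{\times}$ and $\sum_{y\neq0}\theta(y(t+1))=\sum_{u\neq0}\theta(u)=-1$, the last equality because $\sum_{u\in\mathbb{F}_q}\theta(u)=0$ for the nontrivial additive character $\theta$, while $\theta(0)=1$. Substituting these two cases gives
\begin{align}
G_kG_{-k}=T^k(-1)(q-1)-\sum_{\substack{t\in\mathbb{F}_q^{\times}\\ t\neq-1}}T^k(t).\notag
\end{align}

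It remains to treat the residual multiplicative sum, and this is the single place where the hypothesis $T^k\neq\varepsilon$ is needed. Because $T^k$ is a nontrivial character, $\sum_{t\in\mathbb{F}_q^{\times}}T^k(t)=0$, and hence $\sum_{t\neq-1}T^k(t)=-T^k(-1)$. Combining the two contributions yields $G_kG_{-k}=T^k(-1)(q-1)+T^k(-1)=qT^k(-1)$, as claimed. The computation is otherwise entirely routine; the only point that genuinely requires attention is this final orthogonality relation, since for $T^k=\varepsilon$ one would instead have $\sum_t T^k(t)=q-1$ and the stated identity would fail, which is precisely why the nontriviality assumption $T^k\neq\varepsilon$ is imposed. (An alternative, slightly longer route would first establish $|G_k|^2=q$ and the conjugation relation $\overline{G_k}=T^k(-1)\,G_{-k}$ and then multiply them, but the direct evaluation above reaches the result in one pass.)
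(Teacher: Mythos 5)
Your proof is correct, and there is nothing in the paper to compare it against: the lemma appears there only as a quotation of Greene's Equation~1.12, with no proof supplied. Your argument --- write $G_kG_{-k}$ as a double sum over $(\mathbb{F}_q^{\times})^2$, substitute $x=ty$, evaluate the inner additive sum in the two cases $t=-1$ (giving $q-1$) and $t\neq-1$ (giving $-1$), then use nontriviality of $T^k$ to conclude $\sum_{t\neq-1}T^k(t)=-T^k(-1)$ --- is the standard derivation, and you correctly isolate the single place where the hypothesis $T^k\neq\varepsilon$ enters; your aside is also consistent with the paper's convention $\chi(0):=0$, under which $G_0=-1$ and hence $G_0G_{0}=1\neq q$, a fact the paper itself uses later in computing the term $B$.
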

The following lemma gives a relationship between Gauss and Jacobi sums.
\begin{lemma}\emph{(\cite[Eqn. 1.14]{greene}).}\label{lemma2}
If $T^{m-n}\neq\varepsilon$, then $$G_mG_{-n}=q{T^m\choose
T^n}G_{m-n}T^n(-1)=J(T^m, T^{-n}) G_{m-n}.$$
\end{lemma}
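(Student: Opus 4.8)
The plan is to prove the two displayed equalities separately, working from right to left. The conceptual core is the classical product identity
\begin{align}
G(A)G(B)=J(A,B)\,G(AB),\notag
\end{align}
valid whenever $AB\neq\varepsilon$. Once this is in hand, the second equality of the lemma follows by specializing $A=T^m$, $B=T^{-n}$, and the first equality follows by rewriting the Jacobi sum $J(T^m,T^{-n})$ in terms of the binomial symbol via its definition in \eqref{eq0}.

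First I would establish the product identity by expanding the Gauss sums directly from the definition,
\begin{align}
G(A)G(B)=\sum_{x\in\mathbb{F}_q}\sum_{y\in\mathbb{F}_q}A(x)B(y)\theta(x+y),\notag
\end{align}
and then reorganizing the double sum according to the value $t=x+y$, writing $y=t-x$. For $t\neq 0$ the substitution $x=ts$ factors out $(AB)(t)$ and leaves the inner sum equal to $J(A,B)$, so the $t\neq 0$ terms contribute $J(A,B)\sum_{t\neq 0}(AB)(t)\theta(t)$. For $t=0$ the inner sum collapses to $B(-1)\sum_{x}(AB)(x)$, which vanishes precisely because $AB\neq\varepsilon$; the same hypothesis gives $(AB)(0)=0$, so that $\sum_{t\neq 0}(AB)(t)\theta(t)=G(AB)$. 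Combining the two cases yields $G(A)G(B)=J(A,B)\,G(AB)$.

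Next I would set $A=T^m$ and $B=T^{-n}$, so that $AB=T^{m-n}$ and the hypothesis $T^{m-n}\neq\varepsilon$ is exactly the condition $AB\neq\varepsilon$. Since $G(AB)=G_{m-n}$, the product identity becomes $G_mG_{-n}=J(T^m,T^{-n})\,G_{m-n}$, which is the second displayed equality. To recover the first equality, I would invoke \eqref{eq0} with $\overline{T^n}=T^{-n}$ to get ${T^m\choose T^n}=\frac{T^n(-1)}{q}J(T^m,T^{-n})$; multiplying by $qT^n(-1)G_{m-n}$ and using $T^n(-1)^2=T^n(1)=1$ turns the right-hand side into $J(T^m,T^{-n})\,G_{m-n}$, so that $q{T^m\choose T^n}G_{m-n}T^n(-1)=G_mG_{-n}$, closing the chain.

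This argument is entirely classical, so no single step is a genuine obstacle; the only point that demands care is the case analysis at $t=0$, where the hypothesis $T^{m-n}\neq\varepsilon$ is used twice—once to annihilate the diagonal contribution and once to identify the surviving sum with $G(AB)$.
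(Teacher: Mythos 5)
Your proof is correct. Note that the paper does not prove this lemma at all: it is quoted verbatim from Greene (Eqn.\ 1.14), so there is no internal argument to compare against. What you supply is the standard classical derivation, and it checks out in every detail: the factorization $G(A)G(B)=J(A,B)\,G(AB)$ for $AB\neq\varepsilon$ via the substitution $t=x+y$, $x=ts$; the vanishing of the diagonal $t=0$ contribution $B(-1)\sum_x (AB)(x)$ by orthogonality; the use of the convention $(AB)(0)=0$ to identify $\sum_{t\neq 0}(AB)(t)\theta(t)$ with $G(AB)$; and the translation into the binomial normalization using \eqref{eq0} together with $T^n(-1)^2=1$. You are also right to flag that the hypothesis $T^{m-n}\neq\varepsilon$ is invoked at exactly two places (killing the diagonal term and extending the sum to include $t=0$); omitting either would leave the boundary case unjustified, and indeed the identity genuinely fails when $T^{m-n}=\varepsilon$, where an extra term of size $q-1$ appears. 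Your specialization $A=T^m$, $B=T^{-n}$, with $\overline{T^n}=T^{-n}$, matches the paper's conventions, so the chain of equalities closes as stated.
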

\par We now state the \emph{orthogonality relations} for multiplicative characters.
\begin{lemma}\emph{(\cite[Chapter 8]{ireland}).}\label{lemma3}
We have
\begin{enumerate}
\item $\displaystyle\sum_{x\in\mathbb{F}_q}T^n(x)=\left\{
                                  \begin{array}{ll}
                                    q-1 & \hbox{if~ $T^n=\varepsilon$;} \\
                                    0 & \hbox{if ~~$T^n\neq\varepsilon$.}
                                  \end{array}
                                \right.$
\item $\displaystyle\sum_{n=0}^{q-2}T^n(x)~~=\left\{
                            \begin{array}{ll}
                              q-1 & \hbox{if~~ $x=1$;} \\
                              0 & \hbox{if ~~$x\neq1$.}
                            \end{array}
                          \right.$
\end{enumerate}
\end{lemma}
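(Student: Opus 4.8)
The plan is to prove both identities directly from the group structure of $\mathbb{F}_q^\times$ and its character group, keeping careful track of the convention $\chi(0):=0$ adopted in the paper (so that $\varepsilon(0)=0$ as well). The two parts are the complementary halves of the orthogonality relations for the finite cyclic group $\mathbb{F}_q^\times$: part (1) sums a fixed character over all field elements, while part (2) sums the value of all characters at a fixed element. I would treat them by a substitution argument and a geometric-series evaluation, respectively.

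For part (1), I would first isolate the term $x=0$, which contributes $T^n(0)=0$ in every case, so that $\sum_{x\in\mathbb{F}_q}T^n(x)=\sum_{x\in\mathbb{F}_q^\times}T^n(x)$. When $T^n=\varepsilon$ the remaining sum is $\sum_{x\in\mathbb{F}_q^\times}1=q-1$, giving the first case. When $T^n\neq\varepsilon$, I would choose $y\in\mathbb{F}_q^\times$ with $T^n(y)\neq1$ (possible precisely because $T^n$ is nontrivial) and set $S:=\sum_{x\in\mathbb{F}_q^\times}T^n(x)$. Since $x\mapsto yx$ permutes $\mathbb{F}_q^\times$, the substitution yields $T^n(y)\,S=\sum_{x}T^n(yx)=S$, whence $(T^n(y)-1)S=0$ and therefore $S=0$. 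This settles both cases of (1).

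For part (2), I would again dispose of the degenerate value $x=0$: each summand is $T^n(0)=0$, so the total is $0$, consistent with the case $x\neq1$. For $x\in\mathbb{F}_q^\times$ I would put $\zeta:=T(x)$ and rewrite $\sum_{n=0}^{q-2}T^n(x)=\sum_{n=0}^{q-2}\zeta^{\,n}$, a geometric series in the $(q-1)$-th root of unity $\zeta$. If $x=1$ then $\zeta=1$ and the sum equals $q-1$. The crucial point is that $x\neq1$ forces $\zeta\neq1$: because $T$ generates the order-$(q-1)$ character group, the character $x\mapsto T(x)$ is injective on $\mathbb{F}_q^\times$, so $T(x)=1=T(1)$ would imply $x=1$. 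Granting this, $\zeta$ is a nontrivial $(q-1)$-th root of unity and $\sum_{n=0}^{q-2}\zeta^{\,n}=\frac{\zeta^{q-1}-1}{\zeta-1}=0$, which completes (2).

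The only genuine subtlety — the ``hard part,'' such as it is — lies in the bookkeeping around the convention $\chi(0)=0$ and in justifying that $T(x)=1$ if and only if $x=1$ for $x\in\mathbb{F}_q^\times$; the latter is exactly the statement that $T$, being a generator of the dual group, is a faithful (injective) character of the cyclic group $\mathbb{F}_q^\times$. Once this injectivity is established, both identities collapse to the one-line substitution argument and the geometric-series summation above.
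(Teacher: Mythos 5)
Your proof is correct: the translation trick for part (1) (choosing $y$ with $T^n(y)\neq 1$ and using that $x\mapsto yx$ permutes $\mathbb{F}_q^\times$) and the geometric-series evaluation for part (2), together with the observation that a generator $T$ of the dual group is a faithful character so that $T(x)=1$ forces $x=1$, are exactly the standard orthogonality arguments, and you correctly track the convention $\chi(0):=0$ (including $\varepsilon(0)=0$) in both parts. The paper gives no proof of its own --- it cites the lemma from Ireland and Rosen, Chapter 8 --- and your argument is precisely the standard one found in that reference, so there is nothing to fault and no methodological divergence to report.
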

Using orthogonality, we can write $\theta$ in terms of Gauss sums as given in the following lemma.
\begin{lemma}\emph{(\cite[Lemma 2.2]{fuselier}).}\label{lemma4}
For all $\alpha \in \mathbb{F}_q^{\times}$, $$\theta(\alpha)=\frac{1}{q-1}\sum_{m=0}^{q-2}G_{-m}T^m(\alpha).$$
\end{lemma}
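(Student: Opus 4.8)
The plan is to prove the identity pointwise for a fixed $\alpha \in \mathbb{F}_q^{\times}$ by substituting the definition of the Gauss sum $G_{-m}$ into the right-hand side and collapsing the resulting double sum with the orthogonality relation of Lemma \ref{lemma3}. Since the claim concerns a rearrangement of finitely many terms, no analytic estimates are involved; the whole argument is essentially a finite-field Fourier inversion.

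First I would expand $G_{-m} = G(T^{-m}) = \sum_{x \in \mathbb{F}_q} T^{-m}(x)\theta(x)$ using the definition of the Gauss sum, and observe that, because every character is extended by the convention $T^{-m}(0) := 0$, the term $x = 0$ contributes nothing, so the sum may be taken over $x \in \mathbb{F}_q^{\times}$. Inserting this into the proposed right-hand side and interchanging the two finite sums, I would obtain
\begin{align}
\frac{1}{q-1}\sum_{m=0}^{q-2}G_{-m}T^m(\alpha) = \frac{1}{q-1}\sum_{x\in\mathbb{F}_q^{\times}}\theta(x)\sum_{m=0}^{q-2}T^{-m}(x)T^m(\alpha).\notag
\end{align}
For each nonzero $x$ the characters combine via $T^{-m}(x)T^m(\alpha) = T^m(\alpha x^{-1})$, which is valid precisely because $x$ is invertible. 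Applying the second orthogonality relation of Lemma \ref{lemma3}, the inner sum $\sum_{m=0}^{q-2}T^m(\alpha x^{-1})$ equals $q-1$ exactly when $\alpha x^{-1} = 1$, i.e. $x = \alpha$, and vanishes otherwise. Hence only the single term $x = \alpha$ survives, contributing $\frac{1}{q-1}\cdot\theta(\alpha)\cdot(q-1) = \theta(\alpha)$, which is the asserted formula.

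There is no genuine obstacle in this lemma; the only point requiring a moment's care is the convention $\chi(0) = 0$ fixed in the introduction, since it is what licenses discarding the $x = 0$ term so that the characters can be inverted and the orthogonality relation applied cleanly. Everything else is the standard manipulation of Gauss sums, and indeed the statement is quoted from \cite[Lemma 2.2]{fuselier}.
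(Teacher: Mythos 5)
Your proof is correct, and it is essentially the standard argument: the paper itself quotes this lemma from Fuselier without reproving it, and Fuselier's proof is exactly this finite Fourier inversion — expand $G_{-m}$, swap the finite sums, and collapse via the orthogonality relation of Lemma \ref{lemma3}. Your attention to the convention $\varepsilon(0)=0$ (which makes the $x=0$ term vanish and is consistent with the paper's use of $G_0=-1$) is the right point of care, and nothing is missing.
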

\begin{theorem}\emph{(\cite[Davenport-Hasse Relation]{lang}).}\label{lemma5}
Let $m$ be a positive integer and let $q=p^e$ be a prime power such that $q\equiv 1 (\text{mod}~m)$.
For multiplicative characters $\chi, \psi \in \widehat{\mathbb{F}_q^\times}$, we have
\begin{align}
\prod_{\chi^m=1}G(\chi \psi)=-G(\psi^m)\psi(m^{-m})\prod_{\chi^m=1}G(\chi).
\end{align}
\end{theorem}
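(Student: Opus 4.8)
The statement is the finite-field analogue of Gauss's multiplication formula for the $\Gamma$-function, with the Gauss sum $G$ in the role of $\Gamma$, so the plan is to imitate that derivation using only the definition $G(A)=\sum_{x\in\mathbb F_q^{\times}}A(x)\theta(x)$ and Lemmas \ref{lemma1} and \ref{lemma2}. Since $q\equiv1\pmod m$ I fix a character $\rho$ of exact order $m$, so that the characters $\chi$ with $\chi^{m}=\varepsilon$ are exactly $\rho^{0},\rho^{1},\dots,\rho^{m-1}$ and the claim becomes $\prod_{j=0}^{m-1}G(\rho^{j}\psi)=-\psi(m^{-m})G(\psi^{m})\prod_{j=0}^{m-1}G(\rho^{j})$. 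First I would expand the left-hand side, using $\rho^{0}(x_0)\cdots\rho^{m-1}(x_{m-1})=\rho(x_1x_2^{2}\cdots x_{m-1}^{m-1})$, as the $m$-fold character sum
\begin{align}
\prod_{j=0}^{m-1}G(\rho^{j}\psi)=\sum_{x_0,\dots,x_{m-1}\in\mathbb F_q^{\times}}\psi(x_0\cdots x_{m-1})\,\rho(x_1x_2^{2}\cdots x_{m-1}^{m-1})\,\theta(x_0+\cdots+x_{m-1}).\notag
\end{align}

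The key manipulation is the homogenizing substitution $x_i=x_0t_i$ for $1\le i\le m-1$, a bijection of $(\mathbb F_q^{\times})^{m}$ that turns the additive argument into $x_0(1+t_1+\cdots+t_{m-1})$. Summing over $x_0$ first yields, for each tuple with $S:=1+t_1+\cdots+t_{m-1}\neq0$, the single Gauss sum $G(\psi^{m}\delta)$ times $\overline{\psi^{m}\delta}(S)$, where $\delta:=\rho^{m(m-1)/2}$ equals $\varepsilon$ for odd $m$ and the quadratic character $\phi$ for even $m$; by Lemma \ref{lemma3} the tuples with $S=0$ drop out as long as $\psi^{m}\delta\neq\varepsilon$. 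Writing $\lambda_i:=\psi\rho^{i}$ and applying $t_i\mapsto-t_i$ to produce the affine constraint $t_1+\cdots+t_{m-1}+t_m=1$, the leftover sum is a multivariable Jacobi sum, so that
\begin{align}
\prod_{j=0}^{m-1}G(\rho^{j}\psi)=(\psi^{m-1}\delta)(-1)\,G(\psi^{m}\delta)\,J\big(\lambda_1,\dots,\lambda_{m-1},\overline{\psi^{m}\delta}\big),\notag
\end{align}
and iterating Lemma \ref{lemma2} rewrites $J$ as $\prod_{i=1}^{m-1}G(\lambda_i)\cdot G(\overline{\psi^{m}\delta})/G(\overline{\psi})$, the last denominator coming from $\lambda_1\cdots\lambda_{m-1}\overline{\psi^{m}\delta}=\overline{\psi}$.

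The main obstacle is that this reduction is self-referential: the factors $\prod_{i=1}^{m-1}G(\lambda_i)=\prod_{i=1}^{m-1}G(\rho^{i}\psi)$ reproduce all but one factor of the left-hand side, and collapsing $G(\psi^{m}\delta)G(\overline{\psi^{m}\delta})=(\psi^{m}\delta)(-1)q$ by Lemma \ref{lemma1} returns only the tautology $G(\psi)=G(\psi)$. Thus the purely formal Gauss/Jacobi calculus afforded by Lemmas \ref{lemma1}--\ref{lemma2} cannot by itself produce the genuinely new content, namely the single Gauss sum $G(\psi^{m})$ and the normalizing constant $-\psi(m^{-m})$; some independent input is needed. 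To supply it I would take one of the two classical routes: either factor each Gauss sum by the Gross--Koblitz formula into $p$-adic $\Gamma$-values and deduce the identity from the multiplication formula for the $p$-adic $\Gamma$-function, or follow Davenport and Hasse's original comparison of Gauss sums along the tower $\mathbb F_q\subset\mathbb F_{q^{n}}$ via the norm map; multiplicativity of the relation in $m$ reduces either argument to the case of prime $m$. The delicate bookkeeping that remains is tracking the sign (traceable to $G(\varepsilon)=-1$), the constant $\psi(m^{-m})$, and the parity-dependent character $\delta$, which must cancel so that $G(\psi^{m}\delta)$ is ultimately replaced by $G(\psi^{m})$.
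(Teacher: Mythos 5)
The first thing to note is that the paper contains no proof of this statement to compare against: Theorem \ref{lemma5} is quoted as a classical result from Lang's \emph{Cyclotomic Fields} and is used as a black box (only its specializations in Corollary \ref{coro1} are ever applied). Your attempt must therefore stand on its own as a reproof of the Hasse--Davenport product relation, and as such it is incomplete. To your credit, the partial computations are correct: the expansion of $\prod_{j}G(\rho^{j}\psi)$ as an $m$-fold sum, the homogenizing substitution $x_i=x_0t_i$, the identification of $\delta=\rho^{m(m-1)/2}$ as $\varepsilon$ or $\phi$ according to the parity of $m$, and the collapse to the multivariable Jacobi sum with $\lambda_1\cdots\lambda_{m-1}\overline{\psi^{m}\delta}=\overline{\psi}$ are all right, and your diagnosis that feeding this back through Lemma \ref{lemma2} and Lemma \ref{lemma1} returns a tautology is a genuine (and correctly identified) obstruction: the Gauss--Jacobi calculus of Lemmas \ref{lemma1}--\ref{lemma2} alone cannot see the constant $\psi(m^{-m})$ or the sign, so no rearrangement within it can close the argument.

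The gap is that your proof stops exactly where the theorem begins. The two completions you invoke --- Gross--Koblitz together with the multiplication formula for the $p$-adic Gamma function, or Davenport--Hasse's comparison of Gauss sums along the tower $\mathbb{F}_q\subset\mathbb{F}_{q^{n}}$ via the norm map --- are each substantial theorems, and you neither prove them nor carry out the deduction of the product relation from them; naming the engine of a proof is not supplying it. Two subsidiary steps are also left unverified: the asserted reduction to prime $m$ ("multiplicativity of the relation in $m$") requires checking that the constants $-\psi(m^{-m})$ compose correctly across a factorization $m=m_1m_2$, which is a computation involving precisely the constant your formal calculus cannot produce; and the degenerate case $\psi^{m}\delta=\varepsilon$, where the tuples with $S=0$ contribute $(q-1)$ times a nonvanishing sum, is flagged but never handled. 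So the proposal is an honest, well-informed plan with a correct negative result at its core, but it does not prove the statement; within this paper the appropriate resolution is the one the authors themselves take, namely citation of the classical literature.
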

We have the following two special cases of Davenport-Hasse relation. For details, see \cite{BK4}.
\begin{corollary}\label{coro1}
Let $d$ be a positive integer, $l\in\mathbb{Z}$, $q=p^e\equiv1$
$($mod $d)$, and $t\in\{1, -1\}$.
\begin{itemize}
  \item If $d> 1$ is odd, then
  \begin{align}\label{eq11}
G_lG_{l+t\frac{q-1}{d}}G_{l+t \frac{2(q-1)}{d}}\cdots G_{l+t
\frac{(d-1)(q-1)}{d}}=q^{\frac{d-1}{2}}T^{\frac{(d-1)(d+1)(q-1)}{8d}}(-1)T^{-l}\left(d^d\right)G_{ld}.
\end{align}
  \item If $d$ is even, then
\begin{align}\label{eq12}
G_lG_{l+t \frac{q-1}{d}}G_{l +t \frac{2(q-1)}{d}}\cdots G_{l+t
\frac{(d-1)(q-1)}{d}}=q^{\frac{d-2}{2}}G_{\frac{q-1}{2}}T^{\frac{(d-2)(q-1)}{8}}(-1)T^{-l}(d^d)G_{ld}.
\end{align}
\end{itemize}
\end{corollary}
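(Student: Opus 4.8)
The plan is to obtain both \eqref{eq11} and \eqref{eq12} as specializations of the Davenport--Hasse relation (Theorem \ref{lemma5}) with $m=d$, reducing everything to a single evaluation of the ``base'' product $\prod_{\chi^d=1}G(\chi)$. First I would observe that, since $q\equiv1\pmod d$, the character $T^{t(q-1)/d}$ has exact order $d$, so the $d$ solutions of $\chi^d=\varepsilon$ are precisely $T^{tj(q-1)/d}$ for $j=0,1,\dots,d-1$; the sign $t\in\{1,-1\}$ merely permutes this set. Hence the left-hand product in either case is exactly $\prod_{j=0}^{d-1}G_{l+tj(q-1)/d}=\prod_{\chi^d=1}G(\chi T^l)$. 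Applying Theorem \ref{lemma5} with $m=d$ and $\psi=T^l$ then gives $\prod_{\chi^d=1}G(\chi T^l)=-\,G_{ld}\,T^{-l}(d^d)\prod_{\chi^d=1}G(\chi)$, using $G((T^l)^d)=G_{ld}$ and $T^l(d^{-d})=T^{-l}(d^d)$. (Note $d$ is invertible in $\mathbb{F}_q$, since $d\mid q-1$ forces $p\nmid d$.)

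The heart of the argument is the evaluation of $P:=\prod_{\chi^d=1}G(\chi)=\prod_{j=0}^{d-1}G_{tj(q-1)/d}$. I would isolate the trivial term $G_0=G(\varepsilon)=-1$ and pair each remaining character with its inverse: the characters $T^{tj(q-1)/d}$ and $T^{t(d-j)(q-1)/d}=T^{-tj(q-1)/d}$ are mutually inverse, so Lemma \ref{lemma1} gives $G_{tj(q-1)/d}G_{-tj(q-1)/d}=q\,T^{tj(q-1)/d}(-1)$. When $d$ is odd, every nontrivial character pairs off into $(d-1)/2$ inverse pairs, and collecting the factors yields $P=-q^{\frac{d-1}{2}}T^{c}(-1)$ with exponent $c=t\,\frac{q-1}{d}\sum_{j=1}^{(d-1)/2}j=\frac{(d-1)(d+1)(q-1)}{8d}$, where $T^{c}(-1)=\pm1$ is insensitive to $t$. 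When $d$ is even, the self-inverse quadratic character $T^{(q-1)/2}=\phi$ (occurring at $j=d/2$) must be set aside as the factor $G_{(q-1)/2}$, leaving $(d-2)/2$ inverse pairs and giving $P=-q^{\frac{d-2}{2}}G_{\frac{q-1}{2}}T^{c'}(-1)$ with $c'=t\,\frac{q-1}{d}\sum_{j=1}^{(d-2)/2}j=\frac{(d-2)(q-1)}{8}$.

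Substituting these expressions for $P$ into $\prod_{\chi^d=1}G(\chi T^l)=-\,G_{ld}\,T^{-l}(d^d)\,P$, the two leading minus signs cancel, and one recovers $q^{\frac{d-1}{2}}T^{\frac{(d-1)(d+1)(q-1)}{8d}}(-1)T^{-l}(d^d)G_{ld}$ in the odd case and $q^{\frac{d-2}{2}}G_{\frac{q-1}{2}}T^{\frac{(d-2)(q-1)}{8}}(-1)T^{-l}(d^d)G_{ld}$ in the even case, which are exactly \eqref{eq11} and \eqref{eq12}. I expect the only delicate point to be the bookkeeping inside $P$: one must correctly remove the trivial character (and, for even $d$, the quadratic character $\phi$), verify that the surviving characters genuinely partition into inverse pairs, and evaluate the arithmetic sum $\sum j$ that governs the power of $T(-1)$. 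Confirming that $t$ ultimately disappears — because each $T^{c}(-1)$ is a sign and hence equal to its own inverse — is the final routine consistency check.
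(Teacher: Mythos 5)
Your proof is correct and follows exactly the route the paper intends: the paper presents Corollary \ref{coro1} as ``two special cases of Davenport-Hasse relation'' (deferring details to \cite{BK4}), and your argument --- specializing Theorem \ref{lemma5} with $m=d$ and $\psi=T^l$, then evaluating $\prod_{\chi^d=1}G(\chi)$ by pairing each nontrivial character with its inverse via Lemma \ref{lemma1}, setting aside $G_0=-1$ and, for even $d$, the self-inverse factor $G_{\frac{q-1}{2}}$ --- is precisely that specialization. The bookkeeping also checks out: the sums $\sum_{j}j$ yield the exponents $\frac{(d-1)(d+1)(q-1)}{8d}$ and $\frac{(d-2)(q-1)}{8}$, the two leading minus signs cancel, and the parameter $t$ drops out since it only permutes the set of characters of order dividing $d$ and since $T^{c}(-1)$ is a sign.
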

\section{Proof of Theorem \ref{thm1} and Theorem \ref{thm2}}
For $d\geq 2$, the hyperelliptic curve $E_d$ defined over $\mathbb{F}_q$ is given by $$E_d: y^2=x^d+ax+b.$$
Then the number of points on $E_d$ over $\mathbb{F}_q$ is given by $$N_d=\#\{(x, y)\in\mathbb{F}_q^2: P(x,y)=0\},$$
where $$P(x,y)=x^d+ax+b-y^2.$$
Using the elementary identity from \cite{ireland}
\begin{align}
\sum_{z\in \mathbb{F}_q}\theta(zP(x,y))=\left\{
                                            \begin{array}{ll}
                                              q & \hbox{if $P(x,y)=0$;} \\
                                              0 & \hbox{if $P(x,y)\neq 0,$}
                                            \end{array}
                                          \right.\notag
\end{align}
we obtain that
\begin{align}\label{eq1}
q\cdot N_d&=\sum_{x,y,z \in\mathbb{F}_q}\theta(zP(x,y))\notag\\
=~& q^2+\sum_{z\in\mathbb{F}_q^\times}\theta(zb)+\sum_{y,z\in\mathbb{F}_q^\times}\theta(zb)\theta(-zy^2)+
\sum_{x,z\in\mathbb{F}_q^\times}\theta(zb)\theta(zx^d)\theta(zax)\notag\\
&+\sum_{x,y,z\in\mathbb{F}_q^\times}\theta(zb)\theta(zx^d)\theta(zax)\theta(-zy^2)\notag\\
:=~& q^2+A+B+C+D.
\end{align}
We use Lemma \ref{lemma4} and Lemma \ref{lemma3} repeatedly in each term of \eqref{eq1} to simplify and express in terms of Gauss sums. We obtain
\begin{align}
A=\frac{1}{q-1}\sum_{z\in\mathbb{F}_q^\times}\sum_{l=0}^{q-2}G_{-l}T^l(zb)
=\frac{1}{q-1}\sum_{l=0}^{q-2}G_{-l}T^l(b)\sum_{z\in\mathbb{F}_q^\times}T^l(z)=-1.\notag
\end{align}
Similarly,
\begin{align}
B=&\frac{1}{(q-1)^2}\sum_{l,m=0}^{q-2}G_{-l}G_{-m}T^l(b)T^m(-1)\sum_{z\in\mathbb{F}_q^\times}T^{l+m}(z)\sum_{y\in\mathbb{F}_q^\times}T^{2m}(y).\notag
\end{align}
This term is nonzero only if $m=0$ or $\frac{q-1}{2}$ and $l=-m$. Thus the fact $G_0=-1$ and Lemma \ref{lemma1} yield
\begin{align}
B=1+q\phi(b).\notag
\end{align}
Expanding the next term, we have
\begin{align}
C=\frac{1}{(q-1)^3}\sum_{l,m,n=0}^{q-2}G_{-l}G_{-m}G_{-n}T^l(b)T^n(a)\sum_{z\in \mathbb{F}_q^\times}T^{l+m+n}(z)
\sum_{x\in \mathbb{F}_q^\times}T^{md+n}(x).\nonumber
\end{align}
Finally,
\begin{align}
D=&\frac{1}{(q-1)^4}\sum_{l,m,n,k=0}^{q-2}G_{-l}G_{-m}G_{-n}G_{-k}T^l(b)T^n(a)T^k(-1) \times \nonumber\\
&\sum_{z\in \mathbb{F}_q^\times}T^{l+m+n+k}(z)\sum_{x\in \mathbb{F}_q^\times}T^{md+n}(x)\sum_{y\in\mathbb{F}_q^\times}T^{2k}(y)\nonumber.
\end{align}
The innermost sum of $D$ is nonzero only if $k=0$ or $\frac{q-1}{2}$. For $k=0$, we obtain the term equal to $-C$. Further for $k=\frac{q-1}{2}$,
we denote the term by $D_{\frac{q-1}{2}}$ given as
\begin{align}
D_{\frac{q-1}{2}}=&~\frac{\phi(-1)G_{\frac{q-1}{2}}}{(q-1)^3}\sum_{l,m,n=0}^{q-2}G_{-l}G_{-m}G_{-n}T^l(b)T^n(a)\times\notag\\
&\sum_{z\in \mathbb{F}_q^\times}T^{l+m+n+\frac{q-1}{2}}(z)\sum_{x\in \mathbb{F}_q^\times}T^{md+n}(x).\notag
\end{align}
Here, the term $D_{\frac{q-1}{2}}$ is zero unless $n=-md$ and $l=(d-1)\{m+\frac{q-1}{2(d-1)}\}$. Then using Lemma \ref{lemma3}, we have
\begin{align}\label{eq2}
D_{\frac{q-1}{2}}=\frac{\phi(-b)G_{\frac{q-1}{2}}}{(q-1)}\sum_{m=0}^{q-2}G_{(d-1)(-m-\frac{q-1}{2(d-1)})}G_{-m}G_{dm}T^m\left(\frac{b^{d-1}}{a^d}\right).
\end{align}
Thus, combining values of $A,B,C,D$ all together in \eqref{eq1}, we have
\begin{align}\label{eq3}
q\cdot N_d=q^2+q\phi(b)+D_{\frac{q-1}{2}}.
\end{align}
If $d\geq 2$ is an even integer, from \eqref{eq11} and \eqref{eq12}, we have 
\begin{align}
G_{dm}=\frac{G_mG_{m+ \frac{q-1}{d}}G_{m + \frac{2(q-1)}{d}}\cdots G_{m+
\frac{(d-1)(q-1)}{d}}}{q^{\frac{d-2}{2}}G_{\frac{q-1}{2}}T^{\frac{(d-2)(q-1)}{8}}(-1)T^{-m}(d^d)}\notag
\end{align}
and
\begin{align}
G_{(d-1)(-m-\frac{q-1}{2(d-1)})}=\frac{G_{-m-\frac{q-1}{2(d-1)}}G_{-m-\frac{3(q-1)}{2(d-1)}}G_{-m-\frac{5(q-1)}{2(d-1)}}
\cdots G_{-m-\frac{(2d-3)(q-1)}{2(d-1)}}}{q^{\frac{d-2}{2}}T^{\frac{(d-2)d(q-1)}{8(d-1)}}(-1)T^{m+\frac{q-1}{2(d-1)}}\left((d-1)^{d-1}\right)}.\notag
\end{align}
Using these in \eqref{eq2}, we obtain
\begin{align}
D_{\frac{q-1}{2}}=&\frac{\phi(-b(d-1))T^{\frac{(d-2)(q-1)}{8(d-1)}}(-1)}{q^{d-2}(q-1)}
\sum_{m=0}^{q-2}\{G_mG_{-m}\}\{G_{m+ \frac{q-1}{d}}G_{-m-\frac{q-1}{2(d-1)}}\}\notag\\
&\times \{G_{m+ \frac{2(q-1)}{d}}G_{-m-\frac{3(q-1)}{2(d-1)}}\}\cdots
\{G_{m+ \frac{(d-2)(q-1)}{2d}}G_{-m-\frac{(d-3)(q-1)}{2(d-1)}}\} \notag\\
&\times \{G_{m+ \frac{d(q-1)}{2d}}G_{-m-\frac{(d-1)(q-1)}{2(d-1)}}\}
\{G_{m+ \frac{(d+2)(q-1)}{2d}}G_{-m-\frac{(d+1)(q-1)}{2(d-1)}}\}\notag\\
&\times \cdots\{G_{m+ \frac{(d-1)(q-1)}{d}}G_{-m-\frac{(2d-3)(q-1)}{2(d-1)}}\}T^m\left(\frac{d^db^{d-1}}{(d-1)^{d-1}a^d}\right)\notag\\
=&\frac{\phi(-b(d-1))T^{\frac{(d-2)(q-1)}{8(d-1)}}(-1)}{q^{d-2}(q-1)}\sum_{m=0}^{q-2}\{G_{m+ \frac{(q-1)}{2}}G_{-m}\}
\{G_mG_{-m-\frac{(q-1)}{2}}\}\notag\\
&\times \{G_{m+ \frac{q-1}{d}}G_{-m-\frac{q-1}{2(d-1)}}\}\{G_{m+ \frac{2(q-1)}{d}}G_{-m-\frac{3(q-1)}{2(d-1)}}\}\notag\\
&\times \cdots  \{G_{m+ \frac{(d-2)(q-1)}{2d}}G_{-m-\frac{(d-3)(q-1)}{2(d-1)}}\}\{G_{m+ \frac{(d+2)(q-1)}{2d}}G_{-m-\frac{(d+1)(q-1)}{2(d-1)}}\}\notag\\
&\times\cdots\{G_{m+\frac{(d-1)(q-1)}{d}} G_{-m-\frac{(2d-3)(q-1)}{2(d-1)}}\}T^m\left(\frac{d^db^{d-1}}{(d-1)^{d-1}a^d}\right).\notag
\end{align}
Using Lemma \ref{lemma2} in each term in bracket, we deduce that
\begin{align}
D_{\frac{q-1}{2}}=&\frac{q^2\phi(-b(d-1))T^{\frac{(d-2)(q-1)}{8(d-1)}}(-1)}{(q-1)}\{G_{\frac{q-1}{2}}G_{-\frac{q-1}{2}}\}
\{G_{\frac{(d-2)(q-1)}{2d(d-1)}} G_{-\frac{(d-2)(q-1)}{2d(d-1)}}\} \notag\\
&\times \cdots \{G_{\frac{2(q-1)}{2d(d-1)}} G_{-\frac{2(q-1)}{2d(d-1)}}\}
\sum_{m=0}^{q-2}{T^{m+\frac{q-1}{2}} \choose T^m}{T^{m} \choose T^{m+\frac{q-1}{2}}}{T^{m+\frac{q-1}{d}} \choose T^{m+\frac{q-1}{2(d-1)}}}\notag\\
&\times{T^{m+\frac{2(q-1)}{d}} \choose T^{m+\frac{3(q-1)}{2(d-1)}}}\cdots {T^{m+\frac{(d-2)(q-1)}{2d}} \choose T^{m+\frac{(d-3)(q-1)}{2(d-1)}}}{T^{m+\frac{(d+2)(q-1)}{2d}} \choose T^{m+\frac{(d+1)(q-1)}{2(d-1)}}}
\cdots {T^{m+\frac{(d-1)(q-1)}{d}} \choose T^{m+\frac{(2d-3)(q-1)}{2(d-1)}}} \notag\\
&\times T^m\left(\frac{d^db^{d-1}}{(d-1)^{d-1}a^d}\right).\notag
\end{align}
Thus Lemma \ref{lemma1} yields
\begin{align}
D_{\frac{q-1}{2}}&=q^{\frac{d+2}{2}}\phi(b(d-1))\notag\\
&\times {_{d}}F_{d-1}\left(\begin{array}{cccccccccc}
                \phi, & \varepsilon, & \chi, & \chi^2, &\ldots, &\chi^\frac{d-2}{2}, &\chi^\frac{d+2}{2}, & \ldots, & \chi^{d-1}\\
                 & \phi, & \psi, &\psi^3, &\ldots, &\psi^{d-3}, &\psi^{d+1}, & \ldots, & \psi^{2d-3}
              \end{array}\mid \alpha \right),\notag
\end{align}
where $\alpha = \dfrac{d}{a}\left(\dfrac{bd}{a(d-1)}\right)^{d-1}$.
Hence \eqref{eq3} completes the proof of Theorem \ref{thm1}.
\par Now we prove Theorem \ref{thm2}.
For $d>1$ odd integer, the Davenport-Hasse relations \eqref{eq11} and \eqref{eq12} yield
\begin{align}
G_{dm}=\frac{G_mG_{m+ \frac{q-1}{d}}G_{m+ \frac{2(q-1)}{d}}\cdots G_{m+
\frac{(d-1)(q-1)}{d}}}{q^{\frac{d-1}{2}}T^{\frac{(d-1)(d+1)(q-1)}{8d}}(-1)T^{-m}\left(d^d\right)},\notag
\end{align}
and
\begin{align}
G_{(d-1)(-m-\frac{q-1}{2(d-1)})}=\frac{G_{-m-\frac{q-1}{2(d-1)}}G_{-m-\frac{3(q-1)}{2(d-1)}}G_{-m-\frac{5(q-1)}{2(d-1)}}
\cdots G_{-m-\frac{(2d-3)(q-1)}{2(d-1)}}}{q^{\frac{d-3}{2}}G_{\frac{q-1}{2}}T^{\frac{(d-3)(q-1)}{8}}(-1)T^{m+\frac{q-1}{2(d-1)}}\left((d-1)^{d-1}\right)}.
\notag
\end{align}
We use these relations in \eqref{eq2} to obtain
\begin{align}
D_{\frac{q-1}{2}}=&\frac{\phi(-b(d-1))T^{\frac{(3d-1)(q-1)}{8d}}(-1)}{q^{d-2}(q-1)}\sum_{m=0}^{q-2}\{G_mG_{-m}\}
\{G_{m+ \frac{q-1}{d}}G_{-m-\frac{q-1}{2(d-1)}}\}\notag\\
&\times \{G_{m+ \frac{2(q-1)}{d}}G_{-m-\frac{3(q-1)}{2(d-1)}}\}\cdots
\{G_{m+ \frac{(d-1)(q-1)}{2d}}G_{-m-\frac{(d-2)(q-1)}{2(d-1)}}\}\notag\\
&\times \{G_{m+ \frac{(d+1)(q-1)}{2d}}G_{-m-\frac{d(q-1)}{2(d-1)}}\}
\{G_{m+ \frac{(d+3)(q-1)}{2d}}G_{-m-\frac{(d+2)(q-1)}{2(d-1)}}\}\notag\\
&\times \cdots\{G_{m+ \frac{(d-1)(q-1)}{d}}G_{-m-\frac{(2d-3)(q-1)}{2(d-1)}}\}T^m\left(\frac{d^db^{d-1}}{(d-1)^{d-1}a^d}\right).\notag
\end{align}
To eliminate $G_mG_{-m}$, we use the facts that if $m\neq0$, then $G_mG_{-m}=qT^m(-1)$; and if $m=0$, then $G_mG_{-m}=1=qT^m(-1)-(q-1)$
in appropriate identities above. After that, we rearrange the second term to deduce that
\begin{align}
D_{\frac{q-1}{2}}=&\displaystyle\frac{\phi(-b(d-1))T^{\frac{(3d-1)(q-1)}{8d}}(-1)}{q^{d-3}(q-1)}
\sum_{m=0}^{q-2}\{G_{m+ \frac{q-1}{d}}G_{-m-\frac{q-1}{2(d-1)}}\}\times\notag\\
& \{G_{m+ \frac{2(q-1)}{d}}G_{-m-\frac{3(q-1)}{2(d-1)}}\}\cdots
\{G_{m+ \frac{(d-1)(q-1)}{2d}}G_{-m-\frac{(d-2)(q-1)}{2(d-1)}}\}\times\notag\\
& \{G_{m+ \frac{(d+1)(q-1)}{2d}}G_{-m-\frac{d(q-1)}{2(d-1)}}\}
\{G_{m+ \frac{(d+3)(q-1)}{2d}}G_{-m-\frac{(d+2)(q-1)}{2(d-1)}}\}\times\notag\\
&\ldots\{G_{m+ \frac{(d-1)(q-1)}{d}}G_{-m-\frac{(2d-3)(q-1)}{2(d-1)}}\}T^m\left(-\frac{d^db^{d-1}}{(d-1)^{d-1}a^d}\right)\notag\\
&-\frac{\phi(-b(d-1))T^{\frac{(3d-1)(q-1)}{8d}}(-1)}{q^{d-2}}\{G_{\frac{q-1}{d}}G_{\frac{(d-1)(q-1)}{d}}\}\cdots\times\notag\\
&\{G_{\frac{(d-1)(q-1)}{2d}}G_{\frac{(d+1)(q-1)}{2d}}\}\{G_{-\frac{q-1}{2(d-1)}}G_{-\frac{(2d-3)(q-1)}{2(d-1)}}\}
\cdots \{G_{-\frac{(d-2)(q-1)}{2(d-1)}}G_{-\frac{d(q-1)}{2(d-1)}}\}.\notag
\end{align}
Using Lemma \ref{lemma2} in the first term, and then Lemma \ref{lemma1} in both terms, we obtain
\begin{align}
D_{\frac{q-1}{2}}=&\frac{q^2\phi(-b(d-1))T^{\frac{(3d-1)(q-1)}{8d}}(-1)}{(q-1)}\{G_{\frac{(d-2)(q-1)}{2d(d-1)}}G_{-\frac{(d-2)(q-1)}{2d(d-1)}}\}\times\notag\\ &\{G_{\frac{(d-4)(q-1)}{2d(d-1)}}G_{-\frac{(d-4)(q-1)}{2d(d-1)}}\}
\cdots\{G_{\frac{(q-1)}{2d(d-1)}}G_{-\frac{(q-1)}{2d(d-1)}}\} \sum_{m=0}^{q-2}{T^{m+ \frac{q-1}{d}}\choose T^{m+\frac{q-1}{2(d-1)}}}\times\notag\\
&{T^{m+ \frac{2(q-1)}{d}}\choose T^{m+\frac{3(q-1)}{2(d-1)}}}\cdots{T^{m+ \frac{(d-1)(q-1)}{2d}}\choose T^{m+\frac{(d-2)(q-1)}{2(d-1)}}}{T^{m+ \frac{(d+1)(q-1)}{2d}}\choose T^{m+\frac{d(q-1)}{2(d-1)}}}
\cdots{T^{m+\frac{(d-1)(q-1)}{d}}\choose T^{m+\frac{(2d-3)(q-1)}{2(d-1)}}}\notag\\
&T^m\left(\frac{-d^db^{d-1}}{(d-1)^{d-1}a^d}\right)-\frac{\phi(-b(d-1))T^{\frac{(3d-1)(q-1)}{8d}}(-1)}{q^{d-2}}\{G_{\frac{q-1}{d}}G_{\frac{(d-1)(q-1)}{d}}\} \cdots \times\notag\\
&\{G_{\frac{(d-1)(q-1)}{2d}}G_{\frac{(d+1)(q-1)}{2d}}\}\{G_{-\frac{q-1}{2(d-1)}}G_{-\frac{(2d-3)(q-1)}{2(d-1)}}\} \cdots\{G_{-\frac{(d-2)(q-1)}{2(d-1)}}G_{-\frac{d(q-1)}{2(d-1)}}\}\notag\\
=&\frac{q^{\frac{d+3}{2}}\phi(-b(d-1))T^{\frac{(q-1)}{4}}(-1)}{(q-1)}\sum_{m=0}^{q-2}{T^{m+ \frac{q-1}{d}}\choose T^{m+\frac{q-1}{2(d-1)}}}{T^{m+ \frac{2(q-1)}{d}}\choose T^{m+\frac{3(q-1)}{2(d-1)}}}\cdots\times\notag\\
&{T^{m+ \frac{(d-1)(q-1)}{2d}}\choose T^{m+\frac{(d-2)(q-1)}{2(d-1)}}}{T^{m+ \frac{(d+1)(q-1)}{2d}}\choose T^{m+\frac{d(q-1)}{2(d-1)}}}\cdots{T^{m+\frac{(d-1)(q-1)}{d}}\choose T^{m+\frac{(2d-3)(q-1)}{2(d-1)}}}\notag\\
&T^m\left(-\frac{d^db^{d-1}}{(d-1)^{d-1}a^d}\right)-q\phi(-b(d-1))T^{\frac{(q-1)}{4}}(-1).\notag
\end{align}
Replacing $m$ by $m-\frac{q-1}{2(d-1)}$ in the first term, we have
\begin{align}
D_{\frac{q-1}{2}}=&\frac{q^{\frac{d+3}{2}}\phi(-b(d-1))T^{\frac{(q-1)}{4}}(-1)}{(q-1)}
\sum_{m=0}^{q-2}{T^{m+ \frac{(d-2)(q-1)}{2d(d-1)}}\choose T^{m}}{T^{m+ \frac{(3d-4)(q-1)}{2d(d-1)}}\choose T^{m+\frac{2(q-1)}{2(d-1)}}}
\cdots\times\notag\\
&{T^{m+ \frac{(d^2-3d+1)(q-1)}{2d(d-1)}}\choose T^{m+\frac{(d-3)(q-1)}{2(d-1)}}}{T^{m+ \frac{(d^2-d-1)(q-1)}{2d(d-1)}}\choose T^{m+\frac{(d-1)(q-1)}{2(d-1)}}}
{T^{m+\frac{(2d^2-5d+2)(q-1)}{2d(d-1)}}\choose T^{m+\frac{(2d-4)(q-1)}{2(d-1)}}}\times\notag\\
&T^m\left(-\frac{d^db^{d-1}}{(d-1)^{d-1}a^d}\right)T^{\frac{q-1}{2(d-1)}}\left(-\frac{(d-1)^{d-1}a^d}{d^db^{d-1}}\right)
-q\phi(-b(d-1))T^{\frac{(q-1)}{4}}(-1)\notag\\
=&-q\phi(-b(d-1))T^{\frac{(q-1)}{4}}(-1)+q^{\frac{d+1}{2}}\phi(-b(d-1))
T^{\frac{(q-1)}{4}}(-1)T^{\frac{q-1}{2(d-1)}}(-\frac{1}{\alpha})\times\notag\\
&{_{d-1}}F_{d-2}\left(\begin{array}{cccccccccc}
                \xi^{d-2},& \xi^{3d-4}, &\ldots, &\xi^{d^2-3d+1}, &\xi^{d^2-d-1}, & \ldots, & \xi^{2d^2-5d+2}\\
                 & \psi^2,  &\ldots & \psi^{d-3}, &\psi^{d-1}, & \ldots, & \psi^{2d-4}
              \end{array}\mid -\alpha \right),\notag
\end{align}
where $\alpha = \dfrac{d}{a}\left(\dfrac{bd}{a(d-1)}\right)^{d-1}$.
We complete the proof of Theorem \ref{thm2} by putting the above value of $D_{\frac{q-1}{2}}$ in \eqref{eq3}.
\par Now we show that Theorem 2.1 of Lennon \cite{lennon1} follows from Theorem 1.2.
\begin{theorem}\emph{(\cite[Thm. 2.1]{lennon1}).}\label{lennon-1}
Let $q = p^e, p > 3$ a prime and $q \equiv 1$ $($mod $12)$. Let $E: y^2=x^3+ax+b$ be an
elliptic curve over $\mathbb{F}_q$ with $j(E) \neq 0, 1728$. Then the trace of
the Frobenius map on $E$ can be expressed as
$$a(E(\mathbb{F}_q))=-q\cdot T^{\frac{q-1}{4}}\left(\frac{a^3}{27}\right){_{2}}F_1\left(\begin{array}{cccc}
                T^{\frac{q-1}{12}}, & T^{\frac{5(q-1)}{12}}\\
                 & T^{\frac{q-1}{2}}
              \end{array}\mid -\frac{27b^2}{4a^3} \right).$$
\end{theorem}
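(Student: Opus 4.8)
The plan is to specialize Theorem~\ref{thm2} to $d=3$ and then convert the affine point count into the trace of Frobenius. First I would put $d=3$: the congruence $q\equiv 1\pmod{2d(d-1)}$ becomes $q\equiv 1\pmod{12}$, which is exactly Lennon's hypothesis, and the parameter becomes $\alpha=\dfrac{3}{a}\left(\dfrac{3b}{2a}\right)^{2}=\dfrac{27b^{2}}{4a^{3}}$, so that $-\alpha=-\dfrac{27b^{2}}{4a^{3}}$ is precisely the argument of Lennon's ${_{2}}F_{1}$. Taking $\xi=T^{(q-1)/12}$ of order $2d(d-1)=12$ and $\psi=T^{(q-1)/4}$ of order $2(d-1)=4$, the numerator characters $\xi^{d-2},\xi^{d^{2}-d-1}$ become $T^{(q-1)/12},T^{5(q-1)/12}$ and the single denominator character $\psi^{2}$ becomes $\phi=T^{(q-1)/2}$. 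Hence the ${_{d-1}}F_{d-2}$ of Theorem~\ref{thm2} is \emph{literally} the ${_{2}}F_{1}$ of Theorem~\ref{lennon-1}; this identification of the hypergeometric data needs no computation.

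Next I would relate the two invariants. For $a,b\neq 0$ the curve $y^{2}=x^{3}+ax+b$ is smooth with $j\neq 0,1728$, and its projective model carries a single point at infinity, so $N_{3}=q-a(E(\mathbb{F}_q))$, i.e. $a(E(\mathbb{F}_q))=q-N_{3}$. Substituting the $d=3$ form of Theorem~\ref{thm2} and abbreviating the (already identified) hypergeometric factor by $F$, one gets
\[
a(E(\mathbb{F}_q))=-\phi(b)+\phi(-2b)\,T^{\frac{q-1}{4}}(-1)-q\,\phi(-2b)\,T^{\frac{q-1}{4}}(-1)\,T^{\frac{q-1}{4}}\!\left(-\tfrac{1}{\alpha}\right)F.
\]
It then remains to check two reductions: (a) the constant terms $-\phi(b)+\phi(-2b)T^{\frac{q-1}{4}}(-1)$ vanish, and (b) the coefficient of $F$ collapses to $-q\,T^{\frac{q-1}{4}}(a^{3}/27)$, which is Lennon's leading factor.

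For (b) I expect only formal character algebra to be needed: since $q\equiv 1\pmod 4$ we have $\phi(-1)=1$, and multiplicativity gives $T^{\frac{q-1}{4}}(-1)\,T^{\frac{q-1}{4}}(-1/\alpha)=T^{\frac{q-1}{4}}(1/\alpha)=T^{\frac{q-1}{4}}\!\left(\dfrac{4a^{3}}{27b^{2}}\right)=\phi(2)\,\phi(b)\,T^{\frac{q-1}{4}}(a^{3}/27)$, using $T^{\frac{q-1}{4}}(2^{2})=\phi(2)$ and $T^{\frac{q-1}{4}}(b^{-2})=\phi(b)$; then $\phi(-2b)\phi(2)\phi(b)=\phi(-4b^{2})=\phi(-1)=1$ yields exactly $-q\,T^{\frac{q-1}{4}}(a^{3}/27)$. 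The genuine obstacle is (a): writing $\phi(-2b)=\phi(2)\phi(b)$ reduces the vanishing to the single identity $T^{\frac{q-1}{4}}(-1)=\phi(2)$. This is the one step requiring arithmetic input beyond character manipulation; it holds for all $q\equiv 1\pmod 4$ and is equivalent to the classical evaluation of $\phi(2)$, namely that $2$ is a square in $\mathbb{F}_q$ precisely when $q\equiv\pm1\pmod 8$, which one checks agrees with $T^{\frac{q-1}{4}}(-1)=(-1)^{(q-1)/4}$ by splitting into the cases $q\equiv 1,5\pmod 8$. Granting this, (a) gives $0$ and (b) the stated factor, so $a(E(\mathbb{F}_q))=-q\,T^{\frac{q-1}{4}}(a^{3}/27)\,F$, which is Theorem~\ref{lennon-1}.
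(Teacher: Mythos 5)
Your proposal is correct and follows essentially the same route as the paper's own proof: specialize Theorem \ref{thm2} to $d=3$ (so $\alpha=\frac{27b^2}{4a^3}$, $\xi=T^{\frac{q-1}{12}}$, $\psi^2=\phi$), use $a(E(\mathbb{F}_q))=q-N_3$, and collapse the constant and leading terms by character algebra together with the identity $\phi(2)=T^{\frac{q-1}{4}}(-1)$. The only difference is cosmetic: you actually justify that last identity via the classical evaluation of $\phi(2)$ (valid for prime powers $q\equiv 1\pmod 4$), whereas the paper simply asserts it, and you organize the cancellation as $\phi(-2b)\phi(2)\phi(b)=\phi(-1)=1$ rather than folding $\phi(-2b)=T^{\frac{q-1}{4}}(4b^2)$ into a single character as the paper does.
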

\begin{proof}
We have $E_3: y^2=x^3+ax+b$ with $a, b\neq 0$. Hence, $j(E_3)\neq 0, 1728$. For $q\equiv 1$ $($mod $12)$, from Theorem \ref{thm2}, we have
\begin{align}
N_3&=\#\{(x, y)\in \mathbb{F}_q^2: y^2=x^3+ax+b\}\notag\\
&=q+\phi(b)-\phi(-2b)T^{\frac{q-1}{4}}(-1)\notag\\
&\hspace{.7cm}+q \phi(-2b)T^{\frac{q-1}{4}}(-1)T^{\frac{q-1}{4}}\left(-\frac{4a^3}{27b^2}\right){_{2}}F_{1}\left(\begin{array}{cccccccccc}
                \xi,& \xi^5\\
                 & \psi^2 \end{array}\mid -\frac{27b^2}{4a^3} \right)\notag\\
&=q+\phi(b)-\phi(2)\phi(b)T^{\frac{q-1}{4}}(-1)\notag\\
&\hspace{.7cm}+qT^{\frac{q-1}{4}}(4b^2)T^{\frac{q-1}{4}}(-1)T^{\frac{q-1}{4}}\left(-\frac{4a^3}{27b^2}\right){_{2}}F_{1}\left(\begin{array}{cccccccccc}
                T^{\frac{q-1}{12}}, & T^{\frac{5(q-1)}{12}}\\
                 & T^{\frac{q-1}{2}} \end{array}\mid -\frac{27b^2}{4a^3} \right).\notag
\end{align}
Since $\phi(2)=T^{\frac{q-1}{4}}(-1)$, therefore $\phi(2)\phi(b)T^{\frac{q-1}{4}}(-1)=\phi(b)$. Hence
\begin{align}
N_3=&q+qT^{\frac{q-1}{4}}\left(\frac{a^3}{27}\right){_{2}}F_1\left(\begin{array}{cccc}
                T^{\frac{q-1}{12}}, & T^{\frac{5(q-1)}{12}}\\
                 & T^{\frac{q-1}{2}}
              \end{array}\mid -\frac{27b^2}{4a^3} \right).\notag
\end{align}
We complete the proof using the fact that $a(E(\mathbb{F}_q))=a(E_3(\mathbb{F}_q))=q-N_3$.
\end{proof}
\section{Proof of Theorem \ref{thm3} and Theorem \ref{thm4}}
We now prove Theorem \ref{thm3} and Theorem \ref{thm4}.
For $d\geq 2$, the hyperelliptic curve $E'_d$ defined over $\mathbb{F}_q$ is given by $$E'_d: y^2=x^d+ax^{d-1}+b,$$
where $a, b\in \mathbb{F}_q^{\times}$.
Then the number of points on $E'_d$ over $\mathbb{F}_q$ is given by $$N'_q=\#E'_d(\mathbb{F}_q)=\#\{(x,y)\in\mathbb{F}_q^2: P'(x,y)=0\},$$
where $$P'(x,y)=x^d+ax^{d-1}+b-y^2.$$
As shown earlier, we have
\begin{align}
q\cdot N'_d=&~q^2+\sum_{z\in\mathbb{F}_q^\times}\theta(zb)+\sum_{y,z\in\mathbb{F}_q^\times}\theta(zb)\theta(-zy^2)+
\sum_{x,z\in\mathbb{F}_q^\times}\theta(zb)\theta(zx^d)\theta(zax^{d-1})\notag\\
&+\sum_{x,y,z\in\mathbb{F}_q^\times}\theta(zb)\theta(zx^d)\theta(zax^{d-1})\theta(-zy^2)\notag\\
&:=q^2+A+B+C+D.\notag
\end{align}
Following the same procedure as followed in the proof of Theorem \ref{thm1}, we obtain
\begin{align}
A=&-1,\notag\\
B=&1+q\phi(b),\notag
\end{align}
and
\begin{align}
D=&-C+D_{\frac{q-1}{2}},\nonumber
\end{align}
where
\begin{align}\label{eq02}
D_{\frac{q-1}{2}}&=\frac{\phi(-1)G_{\frac{q-1}{2}}}{(q-1)^3}\sum_{l,m,n=0}^{q-2}G_{-l}G_{-m}G_{-n}T^l(b)T^n(a)
\sum_{z\in \mathbb{F}_q^\times}T^{l+m+n+\frac{q-1}{2}}(z)\notag\\
&~~\times \sum_{x\in \mathbb{F}_q^\times}T^{md+n(d-1)}(x).
\end{align}
Combining all values of $A, B, C$ and $D$ together, we have
\begin{align}\label{eq03}
q N'_d=q^2+q\phi(b)+D_{\frac{q-1}{2}}.
\end{align}
Now let $d$ be even. Then the term $D_{\frac{q-1}{2}}$ is zero unless $n=-ld$ and $m=(d-1)\{l+\frac{q-1}{2(d-1)}\}$. Then using Lemma \ref{lemma3}, we have
\begin{align}
D_{\frac{q-1}{2}}=\frac{\phi(-1)G_{\frac{q-1}{2}}}{(q-1)}\sum_{l=0}^{q-2}G_{-l}G_{(d-1)(-l-\frac{q-1}{2(d-1)})}G_{ld}T^l\left(\frac{b}{a^d}\right).\notag
\end{align}
Following the proof of Theorem \ref{thm1}, we deduce that
\begin{align}
&D_{\frac{q-1}{2}}=q^{\frac{d+2}{2}}\phi((d-1))\notag\\
&\times {_{d}}F_{d-1}\left(\begin{array}{cccccccccc}
                \phi, & \varepsilon, & \chi, &\chi^2, &\ldots, &\chi^\frac{d-2}{2}, &\chi^\frac{d+2}{2}, & \ldots, & \chi^{d-1}\\
                 & \phi, & \psi, &\psi^3,  &\ldots, &\psi^{d-3}, &\psi^{d+1}, & \ldots, & \psi^{2d-3}
              \end{array}\mid \beta \right),\notag
\end{align}
where $\beta=\dfrac{bd^d}{a^d(d-1)^{d-1}}$. Hence \eqref{eq03} completes the proof of Theorem \ref{thm3}.
\par
For $d>1$ odd integer, the innermost sums of $D_{\frac{q-1}{2}}$ are nonzero only if $m=l(d-1)$ and $n=d(-l-\frac{q-1}{2d})$, at which both are $q-1$.
Therefore, we have
\begin{align}\label{eq04}
D_{\frac{q-1}{2}}=\frac{\phi(-a)G_{\frac{q-1}{2}}}{(q-1)}\sum_{l=0}^{q-2}G_{-l}G_{-(d-1)l}G_{d(l+\frac{q-1}{2d})}T^l\left(\frac{b}{a^d}\right).
\end{align}
The Davenport-Hasse relations \eqref{eq11} and \eqref{eq12} yield
\begin{align}
G_{-(d-1)l}=\frac{G_{-l}G_{-l-\frac{q-1}{d-1}}G_{-l-\frac{2(q-1)}{d-1}}\cdots G_{-l-
\frac{(d-2)(q-1)}{d-1}}}{q^{\frac{d-3}{2}}G_{\frac{q-1}{2}}T^{\frac{(d-3)(q-1)}{8}}(-1)T^{l}\left((d-1)^{d-1}\right)}\notag
\end{align}
and
\begin{align}
G_{d(l+\frac{q-1}{2d})}=\frac{G_{l+\frac{q-1}{2d}}G_{l+\frac{3(q-1)}{2d}}
G_{l+\frac{5(q-1)}{2d}}\cdots G_{l+\frac{(2d-1)(q-1)}{2d}}}{q^{\frac{d-1}{2}}
T^{\frac{(d-1)(d+1)(q-1)}{8d}}(-1)T^{-l-\frac{q-1}{2d}}\left(d^d\right)}.\notag
\end{align}
We use these relations in \eqref{eq04} to obtain
\begin{align}
D_{\frac{q-1}{2}}=&\frac{\phi(-ad)T^{\frac{(3d-1)(q-1)}{8d}}(-1)}{q^{d-2}(q-1)}\sum_{l=0}^{q-2}\{G_{l+\frac{q-1}{2d}}G_{-l}\}
\{G_{l+ \frac{3(q-1)}{2d}}G_{-l-\frac{q-1}{(d-1)}}\}\notag\\
&\times \{G_{l+\frac{5(q-1)}{2d}}G_{-l-\frac{2(q-1)}{(d-1)}}\}\cdots\{G_{l+ \frac{(d-2)(q-1)}{2d}}G_{-l-\frac{(d-3)(q-1)}{2(d-1)}}\}\notag\\
&\times\{G_{l+\frac{d(q-1)}{2d}}G_{-l-\frac{(d-1)(q-1)}{2(d-1)}}\}\{G_{l+ \frac{(d+2)(q-1)}{2d}}G_{-l-\frac{(d+1)(q-1)}{2(d-1)}}\}\notag\\
&\times \cdots\{G_{l+ \frac{(2d-3)(q-1)}{2d}}G_{-l-\frac{(d-2)(q-1)}{(d-1)}}\}
\{G_{l+ \frac{(2d-1)(q-1)}{2d}}G_{-l}\}T^l(\beta),\notag
\end{align}
where $\beta=\frac{d^db}{(d-1)^{d-1}a^d}$. We now eliminate the term $\{G_{l+\frac{d(q-1)}{2d}}G_{-l-\frac{(d-1)(q-1)}{2(d-1)}}\}$, which is equal to $G_{l+\frac{q-1}{2}}G_{-l-\frac{q-1}{2}}$.
We use the fact that
\begin{align}
G_{l+\frac{q-1}{2}}G_{-l-\frac{q-1}{2}}=\left\{
                                          \begin{array}{ll}
                                            qT^{l+\frac{q-1}{2}}(-1), & \hbox{if $l\neq\frac{q-1}{2}$;} \\
                                            qT^{l+\frac{q-1}{2}}(-1)-(q-1), & \hbox{if $l=\frac{q-1}{2}$}
                                          \end{array}
                                        \right.\notag
\end{align}
in appropriate identities above to obtain
\begin{align}
D_{\frac{q-1}{2}}=&\frac{\phi(ad)T^{\frac{(3d-1)(q-1)}{8d}}(-1)}{q^{d-3}(q-1)}\sum_{l=0}^{q-2}\{G_{l+\frac{q-1}{2d}}G_{-l}\}
\{G_{l+ \frac{3(q-1)}{2d}}G_{-l-\frac{q-1}{(d-1)}}\}\notag\\
&\times \{G_{l+\frac{5(q-1)}{2d}}G_{-l-\frac{2(q-1)}{(d-1)}}\}\cdots\{G_{l+ \frac{(d-2)(q-1)}{2d}}G_{-l-\frac{(d-3)(q-1)}{2(d-1)}}\}\notag\\
&\times \{G_{l+ \frac{(d+2)(q-1)}{2d}}G_{-l-\frac{(d+1)(q-1)}{2(d-1)}}\} \cdots\{G_{l+ \frac{(2d-3)(q-1)}{2d}}G_{-l-\frac{(d-2)(q-1)}{(d-1)}}\}\notag\\
&\times \{G_{l+ \frac{(2d-1)(q-1)}{2d}}G_{-l}\}T^l(-\beta)
-\frac{\phi(-b)T^{\frac{(3d-1)(q-1)}{8d}}(-1)}{q^{d-2}}\notag\\
&\times \{G_{\frac{(d+1)(q-1)}{2d}}G_{\frac{q-1}{2}}\}\{G_{\frac{(d+3)(q-1)}{2d}}G_{-\frac{(d+1)(q-1)}{2(d-1)}}\}\notag\\
&\times \{G_{\frac{(d+5)(q-1)}{2d}}G_{-\frac{(d+3)(q-1)}{2(d-1)}}\}
\cdots\{G_{\frac{2(q-1)}{2d}} G_{-\frac{2(q-1)}{2(d-1)}}\}\notag\\
&\times \{G_{-\frac{2(q-1)}{2d}}G_{\frac{2(q-1)}{2(d-1)}}\}\cdots\{G_{\frac{(d-3)(q-1)}{2d}}G_{\frac{(d+1)(q-1)}{2(d-1)}}\}
\{G_{\frac{(d-1)(q-1)}{2d}}G_{\frac{q-1}{2}}\}.\notag
\end{align}
Using Lemma \ref{lemma2} in the first term, and rearranging both terms, we deduce that
\begin{align}
D_{\frac{q-1}{2}}=&\frac{q^2\phi(ad)T^{\frac{(3d-1)(q-1)}{8d}}(-1)}{(q-1)}\{G_{\frac{q-1}{2d}}G_{-\frac{q-1}{2d}}\}\{G_{\frac{(d-3)(q-1)}{2d(d-1)}}
G_{-\frac{(d-3)(q-1)}{2d(d-1)}}\}\notag\\
&\times \{G_{\frac{(d-5)(q-1)}{2d(d-1)}}
G_{-\frac{(d-5)(q-1)}{2d(d-1)}}\}\cdots \{G_{\frac{2(q-1)}{2d(d-1)}}G_{-\frac{2(q-1)}{2d(d-1)}}\}\notag\\
&\times \sum_{l=0}^{q-2}{T^{l+ \frac{q-1}{2d}}\choose T^{l}}{T^{l+ \frac{3(q-1)}{2d}}\choose T^{l+\frac{q-1}{(d-1)}}}
{T^{l+ \frac{5(q-1)}{2d}}\choose T^{l+\frac{2(q-1)}{(d-1)}}}\cdots{T^{l+ \frac{(d-2)(q-1)}{2d}}\choose T^{l+\frac{(d-3)(q-1)}{2(d-1)}}}\notag\\
&\times {T^{m+ \frac{(d+2)(q-1)}{2d}}\choose T^{l+\frac{(d+1)(q-1)}{2(d-1)}}}\cdots
{T^{l+\frac{(2d-3)(q-1)}{2d}}\choose T^{l+\frac{(d-2)(q-1)}{(d-1)}}}{T^{l+ \frac{(2d-1)(q-1)}{2d}}\choose T^{l}}\notag\\
&\times T^l(-\beta)- \frac{\phi(-b)T^{\frac{(3d-1)(q-1)}{8d}}(-1)}{q^{d-2}}
[\{G_{\frac{(d+1)(q-1)}{2d}}G_{\frac{(d-1)(q-1)}{2d}}\} \notag\\
&\times \{G_{\frac{(d+3)(q-1)}{2d}}G_{\frac{(d-3)(q-1)}{2d}}\}\cdots\{G_{\frac{2(q-1)}{2d}} G_{-\frac{2(q-1)}{2d}}\}]
[\{G_{-\frac{(d+1)(q-1)}{2(d-1)}}G_{\frac{(d+1)(q-1)}{2(d-1)}}\}\notag\\
&\times \{G_{-\frac{(d+3)(q-1)}{2(d-1)}}G_{\frac{(d+3)(q-1)}{2(d-1)}}\}\cdots \{G_{-\frac{2(q-1)}{2(d-1)}}G_{\frac{2(q-1)}{2(d-1)}}\}]
\{G_{\frac{q-1}{2}}G_{\frac{q-1}{2}}\}.\notag
\end{align}
Finally, using Lemma \ref{lemma1}, we have
\begin{align}
&D_{\frac{q-1}{2}}=-q\phi(b)+q^{\frac{d+1}{2}}\phi(-ad)\notag\\
&\times {_{d-1}}F_{d-2}\left(\begin{array}{cccccccccc}
                \eta,& \eta^3 &\eta^5, &\ldots, &\eta^{d-2}, &\eta^{d+2}, & \ldots, & \eta^{2d-3}, & \eta^{2d-1}\\
                 & \rho,  &\rho^2, &\ldots & \rho^{\frac{d-3}{2}}, &\rho^{\frac{d+1}{2}}, & \ldots, & \rho^{d-2}, &\varepsilon
              \end{array}\mid -\beta \right).\notag
\end{align}
We complete the proof of Theorem \ref{thm4} by putting the above value of $D_{\frac{q-1}{2}}$ in \eqref{eq03}.
\par Now we show that Theorem 3.1 of authors \cite{BK3} follows from Theorem 1.4.
\begin{theorem}\emph{(\cite[Thm. 3.1]{BK3}).}\label{bk-1}
Let $q = p^e, p > 2$ a prime and $q \equiv 1$ $($mod $6)$. If If $T \in \widehat{\mathbb{F}_q^{\times}}$ is a generator of the character group,
then the trace of the Frobenius on the elliptic curve $E_1: y^2=x^3+ax^2+b, a\neq 0$ is given by
\begin{align}
a_q(E_1)=-qT^{\frac{q-1}{2}}(-3a)~{_{2}}F_1\left(\begin{array}{cccc}
                T^{\frac{q-1}{6}}, & T^{\frac{5(q-1)}{6}}\\
                 & \varepsilon
              \end{array}\mid -\frac{27b}{4a^3} \right),\nonumber
\end{align}
where $\varepsilon$ is the trivial character on $\mathbb{F}_q$.
\end{theorem}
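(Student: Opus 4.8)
The plan is to specialize Theorem \ref{thm4} to $d=3$, since the curve $E_1\colon y^2=x^3+ax^2+b$ is exactly the curve $E'_3$ (take $x^d+ax^{d-1}+b$ with $d=3$). As $d=3$ is odd, Theorem \ref{thm4} applies under the hypothesis $q\equiv 1\ (\mathrm{mod}\ d(d-1))$, i.e. $q\equiv 1\ (\mathrm{mod}\ 6)$, which is precisely the assumption of Theorem \ref{bk-1}.

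First I would record the coarse data of the right-hand side at $d=3$: the prefactor $q^{(d-1)/2}$ becomes $q$, the character $\phi(-ad)$ becomes $\phi(-3a)=T^{\frac{q-1}{2}}(-3a)$, and the series ${_{d-1}}F_{d-2}$ becomes a ${_{2}}F_{1}$. The character $\eta$ has order $2d=6$, so I may take $\eta=T^{\frac{q-1}{6}}$, while $\rho$ has order $d-1=2$, so $\rho=\phi$. The argument is $-\beta$ with $\beta=\frac{bd^d}{a^d(d-1)^{d-1}}=\frac{27b}{4a^3}$, matching the target value $-\frac{27b}{4a^3}$.

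The only delicate point is collapsing the general-$d$ parameter lists correctly, since they are written with a ``gap'' in the middle corresponding to the quadratic character. The numerator list consists of the odd powers $\eta,\eta^3,\dots,\eta^{2d-1}$ with the middle term $\eta^{d}$ omitted (note $\eta^d=\phi$); for $d=3$ this leaves the two entries $\eta$ and $\eta^5=T^{\frac{5(q-1)}{6}}$. The denominator list consists of $\rho,\rho^2,\dots,\rho^{d-2}$ with the middle term $\rho^{(d-1)/2}$ omitted, together with $\varepsilon$; for $d=3$ both blocks $\rho,\dots,\rho^{(d-3)/2}$ and $\rho^{(d+1)/2},\dots,\rho^{d-2}$ run over empty ranges, so only $\varepsilon$ survives. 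Hence the series is ${_{2}}F_{1}\!\left(T^{\frac{q-1}{6}},T^{\frac{5(q-1)}{6}};\varepsilon\mid -\frac{27b}{4a^3}\right)$, and Theorem \ref{thm4} gives $N'_3=q+q\,T^{\frac{q-1}{2}}(-3a)\,{_{2}}F_{1}(\cdots)$.

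Finally I would pass from the affine count to the trace of Frobenius. Since $E_1$ has a single point at infinity, $\#E_1(\mathbb{F}_q)=N'_3+1=q+1-a_q(E_1)$, so $a_q(E_1)=q-N'_3$. Substituting the specialized expression cancels the leading $q$ and yields $a_q(E_1)=-q\,T^{\frac{q-1}{2}}(-3a)\,{_{2}}F_{1}\!\left(T^{\frac{q-1}{6}},T^{\frac{5(q-1)}{6}};\varepsilon\mid -\frac{27b}{4a^3}\right)$, which is the claimed identity. I expect the only real obstacle to be the careful reduction of the general parameter lists to the degenerate $d=3$ case (in particular checking that the denominator collapses to the single character $\varepsilon$); everything else is a direct substitution into Theorem \ref{thm4}.
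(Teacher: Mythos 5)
Your proposal is correct and takes essentially the same route as the paper's own proof: specialize Theorem \ref{thm4} to $d=3$ (with the degenerate parameter lists collapsing to $\eta,\eta^5$ over $\varepsilon$, $\beta=\frac{27b}{4a^3}$, and $\phi(-3a)=T^{\frac{q-1}{2}}(-3a)$), then conclude via $a_q(E_1)=q-N'_3$. The only detail the paper makes explicit that you leave implicit is that nonsingularity of $E_1$ forces $b\neq 0$, so the standing hypothesis $a,b\in\mathbb{F}_q^{\times}$ needed to apply Theorem \ref{thm4} is indeed satisfied.
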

\begin{proof} Since $E_1: y^2=x^3+ax^2+b$ is an elliptic curve, so $b\neq 0$. Hence, both $a$ and $b$ are non-zero. Putting
$d=3$ in Theorem \ref{thm4}, for $q \equiv 1$ $($mod $6)$, we deduce that
\begin{align}
N'_3&=q+q\phi(-3a){_{2}}F_{1}\left(\begin{array}{cccccccccc}
                \eta,& \eta^5\\
                 & \varepsilon
              \end{array}\mid -\frac{27b}{4a^3} \right)\notag\\
              &=q+q\phi(-3a){_{2}}F_{1}\left(\begin{array}{cccccccccc}
                T^{\frac{q-1}{6}}, & T^{\frac{5(q-1)}{6}}\\
                 & \varepsilon
              \end{array}\mid -\frac{27b}{4a^3} \right).\notag
\end{align}
We complete the proof using the fact that $a_q(E_1)=a_q(E'_3)=q-N'_d$.
\end{proof}

\bibliographystyle{amsplain}

\end{document}